\def\G{\Gamma}
\def\a{\alpha}
\def\b{\beta}
\def\o{\omega}
\def\<{\langle}
\def\>{\rangle}
\def\Q{\mathbb{Q}}
\def\Z{\mathbb{Z}}
\def\sm{\smallsetminus}
\newtheorem{Theorem}{Theorem}[section]
\newtheorem{Proposition}[Theorem]{Proposition}
\newenvironment{proof}{\par\medskip\noindent{\it Proof.}~}{\hfill $\square$}
\title{Freiheitss\"{a}tze for one-relator quotients of surface groups
and of limit groups
\footnote{AMS Subject classification: Primary 20F05; Secondary 20E06, 20E08}}
\author{James Howie and Muhammad Sarwar Saeed\\
Department of Mathematics and\\ Maxwell Institute of Mathematical Sciences\\
Heriot-Watt University\\ Edinburgh EH14 4AS\\ UK}
\begin{document}

\maketitle

\begin{abstract}
Three versions of the Freiheitssatz are proved in the context of one-relator quotients of limit groups, where the latter are equipped with $1$-acylindrical splittings over cyclic subgroups.  These are natural extensions of previously published corresponding statements for one-relator quotients of orientable surface groups.  Two of the proofs are new even in that restricted context.
\end{abstract}

\section{Introduction}

The rich theory of groups with a single defining relator has inspired a number of generalizations, in which a one-relator group $F/N(R)$ (where $F$ is a free group, $R$ an element of $F$, and $N(R)$ its normal closure) is replaced by a group of the form $L/N(R)$ for some other free construction $L$.   For example, there is now a fairly well-developed theory of one-relator quotients of free products (see for example the survey article \cite{alta}), and also some work on one-relator quotients of free products with amalgamation and of HNN extensions \cite{FRR, Juh1, Juh2, Juh3}. In another direction, there are the beginnings of a theory of one-relator quotients of (orientable) surface groups \cite{Bog,Hem90,How,Err,MSS}.

Recent major advances in the logical theory of free groups (see \cite{KM,Sela6} and the references cited therein), surrounding the Tarski problems, has brought 
much attention to bear on another class of groups that generalize free groups, namely the {\em limit groups} (also variously known as {\em fully residually free groups} or {\em $\omega$-residually free groups}).  It is therefore natural to consider how to develop a theory of one-relator quotients of limit groups.

A fundamental result which is crucial to the development of any one-relator theory is the Freiheitssatz of Magnus \cite{Mag} and its variants.  In its original formulation, this says that if $Y$ is a subset of a basis $X$ for the free group $F=F(X)$, such that $R$ is not conjugate to a word in $Y$, then $Y$ freely generates a subgroup of the one-relator group $F(X)/N(R)$.  Equivalently, the natural map
$$F(Y) \to F(X) \to F(X)/N(R)$$
is injective.

In order to formulate a Freiheitssatz for a generalized one-relator theory, one first has to identify suitable analogues for these so-called {\em Magnus subgroups} $F(Y)$.  For one-relator quotients of free products $(A*B)/N(R)$ the obvious candidates are the free factors $A,B$.  The Freiheitssatz does not hold in general for these groups: there are examples where the natural map $A\to A*B\to (A*B)/N(R)$ is not injective.  Nevertheless, there are various results giving sufficient conditions for a Freiheitssatz of this form to hold, and 
it is only under such conditions that a further development of the theory
has proved possible.

For a one-relator surface group $\pi_1(\Sigma)/N(R)$, the natural candidates for Magnus subgroups are the fundamental groups $\pi_1(\Sigma_0)$ of incompressible subsurfaces of $\Sigma$, but again it is not always true that the Freiheitssatz hold for all such subgroups.  A very general form of the Freiheitssatz was wrongly asserted in \cite{How}, but the error pointed out and counterexamples
given in \cite{Err}. Nevertheless, one correct Freiheitssatz was proved in \cite{How}:

\begin{Theorem}\label{h0}{\rm \cite[Proposition 3.10]{How}}
Let $S$ be a closed oriented surface, $\a$ a closed curve in $S$, and
$\b$ a simple closed curve in $S$ such that $\a$ is not
homotopic to a curve disjoint from $\b$, and that $\<\a,\b\>=0$.  Then
$\pi_1(S\smallsetminus\b)\to\pi_1(S)/N(\a)$ is injective.
\end{Theorem}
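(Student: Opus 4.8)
The plan is to exploit the decomposition of $\pi_1(S)$ determined by the simple closed curve $\b$. Isotope $\a$ and $\b$ into minimal position, so that they meet transversally and cut out no bigon; the hypothesis that $\a$ is not homotopic off $\b$ then says precisely that they meet. Cutting $S$ along $\b$ produces a surface $S'$ with non-empty boundary, so $\pi_1(S\sm\b)=\pi_1(S')$ is free; and $\pi_1(S)$ is the fundamental group of a graph of groups having $\pi_1(S')$ (if $\b$ is non-separating) or the two free groups $\pi_1(S_1),\pi_1(S_2)$ (if $\b$ separates $S=S_1\cup_\b S_2$) as its vertex group(s), with edge group the infinite cyclic $\<\b\>$. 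With respect to this decomposition $\a$ is not conjugate into a vertex group (equivalently, it is not homotopic off $\b$), while $\<\a,\b\>=0$ records that the exponent sum of the stable letter in $\a$ vanishes. I describe the non-separating case; the separating case (in which $\<\a,\b\>=0$ is automatic) is handled in the same spirit, working directly with the amalgam $\pi_1(S_1)*_{\<\b\>}\pi_1(S_2)$, the input being a Freiheitssatz for one-relator quotients of amalgams of free groups over a cyclic subgroup when the relator is not conjugate into a factor (compare \cite{FRR}).

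So suppose $\b$ is non-separating. Since $\<\a,\b\>=0$, algebraic intersection with $\b$ defines a homomorphism $\pi_1(S)\to\Z$ killing $\a$, hence factoring through $\phi\colon\pi_1(S)/N(\a)\to\Z$. Let $\tilde S\to S$ be the associated infinite cyclic cover: $\tilde S$ is the bi-infinite ladder obtained by gluing copies $S'_i$ ($i\in\Z$) of $S'=S\sm\b$ end to end along their boundary circles, the generator of the deck group shifting $i\mapsto i+1$. Then $\pi_1(\tilde S)$ is free, being the fundamental group of a non-compact surface, and it is at the same time the fundamental group of a graph of groups over the bi-infinite line with vertex groups the $\pi_1(S'_i)$ and cyclic edge groups. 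A computation with normal closures identifies $\ker\phi$ with $\pi_1(\tilde S)/N(\{\a_i:i\in\Z\})$, where $\a_i$ is the loop obtained by lifting $\a$ to $\tilde S$ (equivalently the conjugate of $\a$ by the $i$-th power of the deck transformation). Now $\pi_1(S\sm\b)=\pi_1(S'_0)$ is carried by a single floor of $\tilde S$, so it sits inside $\pi_1(\tilde S)$ as a vertex group and, choosing a free basis of $\pi_1(\tilde S)$ adapted to the ladder, as a free factor; its image in $\pi_1(S)/N(\a)$ lies in $\ker\phi$. Hence it suffices to prove that $\pi_1(S'_0)$ injects into $\pi_1(\tilde S)/N(\{\a_i\})$.

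The point is that this is a \emph{staggered} system of relators over the free --- hence locally indicable --- group $\pi_1(\tilde S)$. Here minimal position enters decisively: because $\a$ and $\b$ bound no bigon, each lift $\a_i$ meets at least two of the ladder's gluing circles, visits a consecutive block of at least two floors, and within every floor of that block traverses an essential arc --- a boundary-parallel arc there would descend to a bigon of $\a$ with $\b$. Consequently each $\a_i$ genuinely involves a generator attached to every floor of its block; in particular no $\a_i$ has support contained in the generating set of floor $0$, and the supports of the $\a_i$ are successive shifts of one another. For the single relator $\a_0$ this already gives, by the Freiheitssatz for one-relator products of locally indicable groups (Brodskii, Howie), that $\pi_1(S'_0)$ embeds in $\pi_1(\tilde S)/N(\a_0)$, since $\a_0$ is not conjugate into the free factor $\pi_1(S'_0)$; to accommodate all the translates one invokes the Freiheitssatz for staggered presentations over a locally indicable group, which permits infinitely many relators. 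Either way the conclusion is that the generators attached to floor $0$ freely generate their image, and since $\pi_1(S'_0)$ is the free factor of $\pi_1(\tilde S)$ on exactly those generators, the desired injectivity follows.

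I expect the main obstacle to be the geometric step in the third paragraph: showing that minimal position of $\a$ with respect to $\b$ really forces the lifted relators into the staggered pattern --- concretely, that no $\a_i$ can be pushed off one of the two extreme gluing circles of its block --- and using the hypothesis sharply, since the unconditional statement is false (see \cite{Err}), so the argument must collapse the instant either the simplicity of $\b$ or the vanishing of $\<\a,\b\>$ is dropped. A secondary point needing care is isolating the precise form of the staggered Freiheitssatz and verifying that the infinite family $\{\a_i\}$, together with the chosen adapted free basis of $\pi_1(\tilde S)$, meets its hypotheses.
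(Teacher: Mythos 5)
Your opening moves coincide with the paper's: cut along $\b$, view $\pi_1(S)$ as an HNN extension (or amalgam) over $\langle\b\rangle$, and in the non-separating case pass to the kernel of the intersection-with-$\b$ homomorphism, which is exactly the infinite chain of floor groups $\pi_1(S'_i)$ amalgamated over the circles $\langle\b_i\rangle$ (the paper's subgroups $A_n=t^nAt^{-n}$). The gap is at your central step, the appeal to a ``staggered Freiheitssatz'' over the free group $\pi_1(\tilde S)$. The floor groups are \emph{not} free factors of $\pi_1(\tilde S)$: consecutive floors share the nontrivial cyclic subgroup $\langle\b_i\rangle$. You can choose a basis making one distinguished floor a free factor, but in any such basis the boundary circles $\b_i$ of the other floors are words smeared across all floors between $i$ and the distinguished one, so the translates $\a_i$ are not staggered with respect to any linear ordering of a basis, and the classical staggered-presentation Freiheitssatz (or its locally indicable variants, which are stated for relators staggered relative to a basis or to free factors) simply does not apply. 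Relatedly, what minimal position actually gives is that the rewritten word is reduced in the amalgamated chain (extreme syllables lie outside $\langle\b_j\rangle$); this does not translate into non-cancellable involvement of the extreme floor's basis elements after expansion, so even the ``each $\a_i$ genuinely involves a generator of every floor of its block'' claim is not available in the sense a staggered theorem would need. This is precisely the difficulty the paper's proof of Theorem \ref{expsumzero} is built to handle: it never kills all translates at once, but inducts on $\ell_e(R)$, working with a single rewritten relator inside a finite sub-chain $H=H_0\ast_{C}A_m$, applying the inductive hypothesis to $H_0\to H/N(R)$ and $H_1\to H/N(R)$, and reassembling $\G/N(R)$ as an HNN extension with associated subgroups $H_0,H_1$; the base case and the case where no suitable $\Z$-quotient exists are dealt with using $1$-acylindricity (malnormality of the boundary subgroups), the Baumslag--Pride theorem, and the theorem on nonsingular systems of equations over locally indicable groups \cite{nonsing}.

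The separating case is also not covered by your proposal. The ``input'' you invoke --- a Freiheitssatz for one-relator quotients of amalgams of free groups over a cyclic subgroup whenever the relator is not conjugate into a factor --- is false in that generality: the results of \cite{FRR,Juh1,Juh3} require proper-power or small-cancellation hypotheses, and the Example in \S\ref{limgps} (and the counterexamples in \cite{Err}) show that some malnormality-type condition is essential. For a separating simple closed curve the boundary subgroups are indeed malnormal, so the statement is true, but it still needs a proof, and your cyclic-cover mechanism has no analogue there since $\langle\cdot,\b\rangle$ vanishes identically. The paper's argument instead finds an epimorphism of one vertex group onto $\Z$ killing the edge generator and the product of the relevant syllables and inducts on length in the resulting chain, or, when no such epimorphism exists, maps that vertex group onto a free group of rank two and applies \cite{nonsing}. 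As it stands, your proposal is not a proof; repairing it would essentially force you back to the one-relator-at-a-time induction through the amalgamated chain that the paper carries out.
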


(Here $\<\a,\b\>$ denotes the algebraic intersection number of the
two curves $\a,\b$ on $S$.)

Two further versions of the Freiheitssatz for one-relator quotients of surface groups were stated (without proof) in \cite{Err}: 

\begin{Theorem}\label{h1}{\rm \cite[Theorem 6]{Err}}
Let $S$ be a closed oriented surface, $\a$ a closed curve in $S$, and
$\b$ a simple closed curve in $S$ such that $\a$ is not homotopic to a curve that meets $\b$ at most once, and that $\<\a,\b\>=\pm 1$.  Then $\pi_1(S\smallsetminus \b )\to \pi_1(S)/N(\a )$ is injective.
\end{Theorem}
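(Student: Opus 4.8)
\emph{Plan.} The idea is to realise $\pi_1(S)$ as an HNN extension dual to $\b$, translate the hypotheses on $\a$ into combinatorial data for $\a$ in this extension, and then run a curvature argument on reduced diagrams for $G:=\pi_1(S)/N(\a)$. Note that the cyclic-cover reduction available for Theorem~\ref{h0} (where $\<\a,\b\>=0$) breaks down here, so a different argument is needed. This also fits the general set-up announced in the abstract: $\pi_1(S)$ is a limit group, and the $\b$-dual splitting is a $1$-acylindrical cyclic splitting with $F:=\pi_1(S\sm\b)$ a vertex group.

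\emph{Setup.} Since $\<\a,\b\>=\pm1\ne0$, the curve $\b$ is non-separating, so $S_0:=S\sm\b$ is connected, $F:=\pi_1(S_0)$ is free, and $\pi_1(S)=\<F,t\mid tc_1t^{-1}=c_2\>$ is an HNN extension over the infinite cyclic subgroup $\<\b\>$, where $c_1,c_2\in F$ are the two boundary curves of $S_0$ (each conjugate in $\pi_1(S)$ to $\b$). If $S$ were a torus, $\<\a,\b\>=\pm1$ would make $[\a]$ primitive and hence $\a$ freely homotopic to a simple closed curve meeting $\b$ once, against our hypothesis; so $S$ has genus at least $2$, $S_0$ is not an annulus, $F$ is non-abelian, and $c_1,c_2$ are not proper powers, generate malnormal cyclic subgroups of $F$, and are not conjugate to one another or to one another's inverse. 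From this the action on the Bass--Serre tree is seen to be $1$-acylindrical: an element fixing a path of length $2$ lies in the intersection of the stabilisers of the two distinct edges at its midpoint --- each a conjugate of $\<c_1\>$ or $\<c_2\>$ --- and any such intersection is trivial.

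\emph{Translating the hypotheses.} Put $\a$ in minimal position with respect to $\b$ and read off a cyclically reduced word $t^{\epsilon_1}g_1\cdots t^{\epsilon_n}g_n$ conjugate to $\a$, with $g_i\in F$, $\epsilon_i\in\{\pm1\}$, $n=i(\a,\b)$, and $\sum_i\epsilon_i=\<\a,\b\>=\pm1$. The assumption that $\a$ is not homotopic to a curve meeting $\b$ at most once says exactly that $n\ge2$; since $n\equiv\sum_i\epsilon_i\pmod 2$, it follows that $n$ is odd, so $n\ge3$, and $\a$ is hyperbolic in the Bass--Serre tree with translation length $n\ge3$ (in particular not conjugate into $F$). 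Also $\a$ is not a proper power, since $\a=\gamma^d$ would give $\<\a,\b\>\in d\Z$. After possibly replacing $\a$ by $\a^{-1}$ (which changes neither $N(\a)$ nor $S_0$) we may assume $\sum_i\epsilon_i=1$, so exactly $(n-1)/2$ of the $\epsilon_i$ equal $-1$; in particular the cyclic sign sequence $\epsilon_1\cdots\epsilon_n$ has two cyclically adjacent $+1$'s and cannot be partitioned into blocks $+-$. This asymmetry is what distinguishes the present case from Theorem~\ref{h0}: there one may pass to the infinite cyclic cover of $S$ coming from the homomorphism $\<-,\b\>\colon\pi_1(S)\to\Z$ and reduce to a known Freiheitssatz for amalgams, whereas here that homomorphism does not descend to $G$, since it sends the conjugates generating $N(\a)$ onto a generating set of $\Z$.

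\emph{The curvature argument.} Suppose $F\to G$ is not injective and pick $w\in F\sm\{1\}$ lying in $N(\a)$ inside $\pi_1(S)$. Take a reduced relative van Kampen diagram (equivalently, a reduced picture) $\Delta$ over $\<\pi_1(S)\mid\a\>$, adapted to the splitting, with boundary label $w$ and a minimal number of $\a$-cells. Since $\pi_1(S)$ is locally indicable (being the group of a closed surface of genus at least $2$) and $\a$ is not a proper power, the Brodskii--Howie analysis of one-relator quotients of locally indicable groups, in the relative form suited to a $1$-acylindrical cyclic splitting, shows $\Delta$ is non-trivial and efficient; assigning corner angles so that every interior region has non-positive curvature and invoking combinatorial Gauss--Bonnet together with the minimality of $\Delta$ then forces $\Delta$ to have no $\a$-cells, whence $w=1$ in $\pi_1(S)$ --- contradicting the incompressibility of $S_0$. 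The crux, and the step genuinely new relative to Theorem~\ref{h0}, is the combinatorial estimate bounding the length of a \emph{piece}: a maximal common subword, measured in the syllable metric of the HNN extension, of two cyclically reduced conjugates of $\a^{\pm1}$. One must show that the unbalanced sign pattern forced by $\sum_i\epsilon_i=\pm1$, together with $n\ge3$ and $1$-acylindricity (which restricts what the labels $g_i$ can do along an overlap), keeps every piece strictly shorter than a third of $\a$ --- which is what the curvature test requires. I expect this piece estimate, and the bookkeeping of the curvature contributed by the edge-group and free vertex-group regions, to be the main obstacle; the remainder is the now-standard diagram calculus for one-relator quotients of graphs of groups.
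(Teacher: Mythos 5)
Your setup (the HNN extension dual to $\beta$, genus $\ge 2$, $1$-acylindricity, $n\ge 3$ odd, $\alpha$ hyperbolic and not a proper power) is correct and matches the paper's framework, but the argument has a genuine gap exactly at the point you flag as the crux. You assert that $\sum_i\epsilon_i=\pm1$, $n\ge3$ and $1$-acylindricity force every piece to be shorter than a third of $\alpha$, and that a non-positive-curvature assignment then kills all $\alpha$-cells. No such estimate follows from the hypotheses, and it is false in general: the letters $g_i$ are arbitrary elements of the free group $\pi_1(S\smallsetminus\beta)$, and nothing in the assumptions imposes any small-cancellation condition on $\alpha$ relative to the splitting --- one can easily choose $\alpha$ (with $\sigma=\pm1$, $\ell\ge3$) containing long repeated syllable blocks, so that two cyclically reduced conjugates of $\alpha^{\pm1}$ share a common subword of length close to half of $\alpha$. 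The sign condition constrains only the $t$-exponents, and $1$-acylindricity only controls coincidences of edge stabilisers, not lengths of overlaps. The appeal to the Brodski\u{\i}--Howie theory does not fill this hole either: that theory handles relators lying in a free product (or in the kernel of a map to $\mathbb{Z}$), which is precisely what fails here since $\sigma_e(\alpha)=\pm1$, as you yourself observe. So the curvature scheme, as described, does not go through.

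The paper proves this statement as the surface case of Theorem \ref{expsumone}, whose engine is not small cancellation but Klyachko's theorem in the Fenn--Rourke formulation. Writing $\Gamma=\langle A,t\mid tat^{-1}=b\rangle$ with $A=\pi_1(S\smallsetminus\beta)$, one conjugates $R=\alpha$ so that $Rt^{-1}$ lies in $H_m=\langle A_0,\dots,A_m\rangle$ (where $A_n=t^nAt^{-n}$) with $m$ minimal, and rewrites $R$ as $(\prod_{i=1}^{k}\alpha_it^{-1}\beta_it)\gamma t$ with $\alpha_i\in H_{m-1}\smallsetminus H_{m-2}$ and $\beta_i\in H_{m-1}\smallsetminus H_{m-2}'$. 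The key input is Theorem \ref{freeprod}: a ping-pong argument in the Bass--Serre tree, using $1$-acylindricity and the fact that $2$-generator subgroups of limit groups are free or abelian, shows that $H_{m-2}$ together with each $\alpha_i$ (and $H_{m-2}'$ with each $\beta_i$) generates a free product. That is exactly the hypothesis of Fenn--Rourke's Theorem 4.1 (Klyachko's ``funny property of the sphere''), which yields that the system $R=1$, $h^{\phi}=tht^{-1}$ ($h\in H_{m-2}$) is solvable over $H_{m-1}$; hence $H_{m-1}$, and in particular $A$, injects into $\Gamma/N(R)$. If you wish to keep a diagrammatic flavour, the step you would need to supply is Klyachko's flow argument on spherical pictures, not a $C'(1/3)$-type piece bound.
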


\begin{Theorem}\label{h2}{\rm \cite[Theorem 7]{Err}}
Let $S$ be a closed oriented surface, $\a$ a closed curve in $S$, and
$\b_1,\b_2$ two disjoint simple closed curves in $S$ such that $\a$ is not homotopic to a curve disjoint from $\b_1$ or from $\b_2$. Then $\pi_1(S\smallsetminus (\b_1\cup\b_2) )\to \pi_1(S)/N(\a )$ is injective.
\end{Theorem}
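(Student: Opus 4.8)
The plan is geometric. Write $Y$ for the $2$-complex obtained by attaching a disc to $S$ along $\a$, so that $\pi_1(Y)=\pi_1(S)/N(\a)$, and set $S_0=S\sm(\b_1\cup\b_2)$. The hypothesis forces each $\b_i$ to be essential (if $\b_i$ bounded a disc, $\a$ could be pushed off it), so $S_0$ is an incompressible subsurface of $S$ and $\pi_1(S_0)\to\pi_1(S)$ is injective; what must be shown is that the kernel of $\pi_1(S)\to\pi_1(Y)$ meets $\pi_1(S_0)$ trivially. One case is immediate: if $\<\a,\b_1\>=0$ then $S\sm\b_1$ is an incompressible subsurface of $S$ containing $S_0$ as an incompressible subsurface, and Theorem~\ref{h0} gives that $\pi_1(S\sm\b_1)\to\pi_1(Y)$ is injective, hence so is $\pi_1(S_0)\to\pi_1(Y)$; similarly if $\<\a,\b_2\>=0$. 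So we may assume $\<\a,\b_1\>\ne0\ne\<\a,\b_2\>$. Now suppose for contradiction that some loop $\gamma$ in $S_0$ is null-homotopic in $Y$ but not in $S_0$. A null-homotopy, made transverse to the centre of the attached disc, yields a planar surface $P$ (a disc with holes) and a map $f\colon P\to S$ taking the outer boundary circle to $\gamma$ and each inner boundary circle to a parametrisation of $\a^{\pm1}$.

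First I would pass to a least-complexity counterexample: among all admissible triples $(\gamma,P,f)$ (over all offending $\gamma$), choose one with the fewest inner boundary circles, and then with $f$ transverse to $\b_1\cup\b_2$ and $|f^{-1}(\b_1\cup\b_2)|$ minimal. Next, using a maximal tower lift, put $f$ into \emph{tower form}: factor $f$ as $P\xrightarrow{\hat f}T\xrightarrow{p}S$ in which $p$ is a composite of covering maps and inclusions of incompressible subsurfaces, $p$ extends over the attached disc to a map $T\cup_{\a'}D^2\to Y$ (with $\a'$ a lift of $\a$), and $\hat f$ is $\pi_1$-surjective; minimality of $P$ is what permits the tower to be taken maximal. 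The curves $\b_1,\b_2$ pull back under $p$ to disjoint embedded $1$-submanifolds $\b_1',\b_2'$ of $T$, still essential and with incompressible complement, and non-empty: were $\b_i'$ empty, $\a'$ would be carried by a subsurface disjoint from $p^{-1}(\b_i)$ and so $\a$ could be homotoped off $\b_i$, against hypothesis. Moreover, since $\a$ meets $\b_i$ essentially, $\a'$ meets $\b_i'$ essentially in $T$.

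Now I would analyse the $1$-manifold $M:=\hat f^{-1}(\b_1'\cup\b_2')\subset P$. Its boundary points lie only on the inner circles of $P$, because the outer boundary maps into $S_0$ and so misses $\b_1'\cup\b_2'$; and by essentiality each inner circle meets both $\hat f^{-1}(\b_1')$ and $\hat f^{-1}(\b_2')$. Innermost-disc surgery on closed components of $M$ bounding discs, and outermost-arc surgery on $\partial$-parallel or otherwise inessential arcs of $M$, each strictly reduce $|f^{-1}(\b_1\cup\b_2)|$ or the number of inner circles, contradicting minimality; so in the minimal configuration every component of $M$ is essential. Combined with $\pi_1$-surjectivity of $\hat f$ and planarity of $P$, I expect this to force the underlying surface of $T$ to be planar, so that $\pi_1$ of $T$ is free and $\pi_1(T\cup_{\a'}D^2)$ is an honest one-relator quotient of a free group. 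The generators associated to $T\sm(\b_1'\cup\b_2')$ then form a Magnus subgroup of that group, and $\a'$ is not conjugate into it because $\a'$ genuinely crosses $\b_1'$; so the classical Freiheitssatz of Magnus says the relevant loop cannot be null-homotopic there, contradicting that $\hat f$, hence $f$, is essential on the outer boundary.

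The step I expect to be the real obstacle is the middle one: arranging the tower so that its terminal surface $T$ still genuinely remembers both curves — that $\b_1'$ and $\b_2'$ stay essential, disjoint and with incompressible complement through every floor — and then establishing that once no surgical simplification of $M$ is possible the only surviving shape for $T$ is planar. The inner boundary circles of $P$ cannot be discarded (they carry $\a'$) and must cross both $\b_1'$ and $\b_2'$, so any reduction has to come from inessential components of $M$; checking that the presence of \emph{two} disjoint curves provides exactly enough slack for this, and handling by hand the low-complexity degenerate configurations (a single inner circle, or $T$ of small Euler characteristic), is where the bulk of the work will lie.
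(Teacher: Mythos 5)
Your proposed route (a Papakyriakopoulos-style tower argument on the complex $S\cup_{\a}D^2$) is entirely different from the paper's, which deduces Theorem \ref{h2} from the limit-group statement Theorem \ref{expsumtwo}: there the proof is algebraic, by induction on $\ell_e(R)+\ell_f(R)$, with the base case handled by using $1$-acylindricity to produce rank-two free subgroups so that $\G/N(R)$ is visibly an amalgam or HNN extension over them, and the inductive step handled by passing to the kernel of an epimorphism to $\Z$, rewriting $R$ in the induced splitting, and expressing $\G/N(R)$ as an HNN extension whose associated subgroups inject by induction. A geometric proof along your lines is not ruled out in principle, but as written your argument has a genuine gap at exactly the point you flag, and the gap is not a routine one.

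Concretely, two of your central claims are unsupported and, I believe, not salvageable in the form stated. First, nothing forces the top surface $T$ of the tower to be planar: $\pi_1$-surjectivity of $\hat f\colon P\to T$ only says that the free group $\pi_1(T)$ is a quotient of the free group $\pi_1(P)$, and a planar surface admits $\pi_1$-surjective maps onto surfaces-with-boundary of arbitrary genus, so planarity of $P$ places no constraint on the genus of $T$; no use of the minimality of $M$ is indicated that would supply such a constraint. Second, even granting planarity, the final step misapplies Magnus: $\pi_1\bigl(T\sm(\b_1'\cup\b_2')\bigr)$ is a vertex group of a splitting of the free group $\pi_1(T)$ along the curves $\b_i'$, and such a vertex group is not in general a Magnus subgroup with respect to a basis in which $\a'$ visibly omits a generator. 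The Freiheitssatz for vertex groups of cyclic splittings is precisely the statement under proof, and it is false without extra hypotheses --- this is the error of \cite{How} corrected by the counterexamples of \cite{Err} --- so invoking it at the top of the tower makes the argument circular rather than complete. (Note also that if $T$ really were planar, each $\b_i'$ would separate $T$, so the algebraic intersection of $\a'$ with $\b_i'$ would vanish and you would be in an exponent-sum-zero situation of the type of Theorem \ref{h0}, again not the classical Freiheitssatz.) Until these two points are replaced by actual arguments that exploit the presence of two disjoint curves each essentially met by $\a$, the proposal does not constitute a proof.
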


The example of surface groups forms a useful template for a theory of one-relator quotients of limit groups.  Every nonabelian limit group splits over a cyclic (possibly trivial) subgroup.  In the case of a surface group this corresponds to splitting the surface over an essential simple closed curve, and the fundamental group of the complement of this curve is one of our candidates for Magnus subgroup. The analogue for a limit group would be a vertex group
for a cyclic splitting.  As with surface groups, the obvious analogue of the Freiheitssatz is not true in complete generality: there are examples where the vertex group does not embed into the one-relator quotient. Nevertheless, a Freiheitssatz holds in many cases, and in this paper we will prove natural analogues of Theorems \ref{h0}, \ref{h1} and \ref{h2} as follows. In these statements $\ell_e(R)$ means the $e$-length of $R$, which is the translation length arising from the splitting along the edge $e$, while $\sigma_e(R)$ means the exponent sum of $e$ in $R$. In the case of a surface group split along a simple closed curve $\b$, these correspond to the geometric intersection number 
and the algebraic intersection number, respectively, of a curve $\a$
representing $R$ with the simple closed curve $\b$.

\medskip\noindent{\bf Theorem \ref{expsumzero}}
{\it Let $\G$ be a limit group expressed as the fundamental group of a $1$-acylindrical graph of groups $(\mathcal{G},X)$, let $e$ be an edge of $X$ with cyclic edge group, and let $R\in\G$ be an element such that $\ell_e(R)>0=\sigma_e(R)$.  Let $A=\pi_1(\mathcal{G},Y)$, where $Y$ is one component of $X\sm\{e\}$. Then the natural homomorphism $A\to\G/N(R)$ is injective.}

\medskip\noindent{\bf Theorem \ref{expsumone}}
{\it Let $\G$ be a limit group expressed as the fundamental group of a $1$-acylindrical graph of groups $(\mathcal{G},X)$, let $e$ be an edge of $X$ with cyclic edge group, and let $R\in\G$ be an element such that $\ell_e(R)>1=\sigma_e(R)$.  Let $A=\pi_1(\mathcal{G},Y)$, where $Y=X\sm\{e\}$.  Then the natural homomorphism $A\to\G/N(R)$ is injective.}

\medskip\noindent{\bf Theorem \ref{expsumtwo}}
{\it Let $\G$ be a limit group expressed as the fundamental group of a $1$-acylindrical graph of groups $(\mathcal{G},X)$, let $e,f$ be edges of $X$ with cyclic edge groups, and let $R\in\G$ be an element such that $\ell_e(R)\ge 1\le\ell_f(R)$.  Let $A=\pi_1(\mathcal{G},Y)$, where $Y$ is one component of $X\sm\{e,f\}$.  Then the natural homomorphism $A\to\G/N(R)$ is injective.}

Our paper therefore serves a dual purpose.  It provides the proofs of Theorems \ref{h1} and \ref{h2} promised in \cite{Err}, while at the same time generalizing them to the context of limit groups and thereby laying the foundations for a theory of one-relator quotients of limit groups.

The rest of the paper is organised as follows: In \S\ref{limgps} we recall the basic properties of limit groups that we need for our purposes, and prove a simple version of the Freiheitssatz where the edge group is trivial (Proposition \ref{freeproduct} below).  Then \S\S \ref{es0}, \ref{es1} and \ref{2e} are devoted to the proofs of Theorems \ref{expsumzero}, \ref{expsumone} and \ref{expsumtwo} respectively.

\section{Limit groups} \label{limgps}

A {\em limit group} is a finitely generated group $G$ with the property that, for each finite subset $A\subset G$, there is a homomorphism $\phi:G\to F$
from $G$ to a free group $F$ that restricts to an injection on $A$.

Examples of limit groups include: free groups and free abelian groups of finite rank, orientable surface groups, and any free product of finitely many of the above.  More complicated examples can be constructed using Sela's {\em $\o$-residually free towers} (see \cite{Sela, BF}). These are graph-of-groups constructions with cyclic edge groups and (simpler) limit groups as vertex groups.

Conversely, any limit group which is not free abelian has a graph-of-groups
decomposition of this type.  For example, if $\Sigma$ is a closed, orientable, hyperbolic surface, then cutting $\Sigma$ along any essential simple closed curve yields a decomposition of $\pi_1(\Sigma)$ as a free product of two free groups with cyclic amalgamated subgroup, or alternatively as an HNN extension of a free base group with cyclic associated subgroups.

A one-relator quotient of a limit group is just the quotient $G=\G/N(R)$ of a limit group $\G$ by the normal closure $N(R)$ of a single element $R\in\G$. Our approach to one-relator theory on such an object will be to consider the length of $R$ in terms of a suitable graph-of-groups decomposition of $\G$.  Indeed, our approach will work equally well in circumstances where the vertex groups are limit groups, but the graph product $\G$ is not necessarily a limit group.  On the other hand, there are simple examples of limit groups in which the Freiheitssatz fails (see \cite{Err} for some surface group examples, as well as the example below). To avoid such pathological examples, we shall impose further restrictions on the graphs of groups that arise.

\medskip\noindent{\bf Example}
Let $F$ be a nonabelian free group and $w\in F$ a word which is not a proper
power in $F$. Let $\G=F\ast_C A$ be the free product of $F$ with a free abelian group $A$, amalgamating a cyclic subgroup $C$, where $C<F$ is generated by $w$, and $C<A$ is a direct factor.  Then $\G$ is a limit group.

Choose $u\in F\sm C$ and $v\in A\sm C$.  Then the natural map $F\to \G/N(uv)$
is not injective, since the commutator $[u,w]$ lies in its kernel.  

Recall that a group action on a tree is {\em $k$-acylindrical} if every geodesic segment of length greater than $k$ has trivial stabilizer.  A graph of groups is {\em $k$-acylindrical} if the action of its fundamental group on the corresponding Bass-Serre tree is $k$-acylindrical.  The natural graph-of-groups decompositions of limit groups described above are always $2$-acylindrical, but not in general $1$-acylindrical 

\medskip 
Clearly the graph of groups in the example above is not $1$-acylindrical.  We shall concentrate on $1$-acylindrical actions of limit groups on trees, whose edge-stabilisers are cyclic.

\bigskip
Note that, if $\G$ is a limit group acting $1$-acylindrically on a tree $T$, $H$ is a subgroup, $T'$ an $H$-invariant subtree of $T$, and $\bar{T}$ the quotient tree of $T'$ obtained by shrinking each component of some $H$-invariant forest to a point, then the resulting action of $H$ on $\bar{T}$ is also $1$-acylindrical. 

\medskip
Suppose that a limit group $\G$ acts $1$-acylindrically on a tree $T$ with
cyclic edge stabilisers.  Suppose that $R\in\G$ is a hyperbolic element - that
is, $R$ does not fix any vertex of $T$.  Then there is a unique {\em axis} of $R$ in $T$, in other words, an $R$-invariant bi-infinite geodesic, upon which $R$ acts by translation.  The {\em translation-length} of $R$ is the edge-path length by which $R$ shifts its axis $A$, or equivalently the number of edges in the quotient graph $\<R\>\backslash A$.

Let $e$ be an edge of $T$.  Then the $e$-length of $R$ is the number $\ell_e(R)$ of edges in $\<R\>\backslash A$ that are images of $\G$-translates of $e$. Equivalently, if $v$ is a vertex of $A$, then the geodesic $[v,R(v)]$ from $v$ to $R(v)$ is a fundamental domain for the action of $R$ on $A$, and the number of edges from the $\G$-orbit $\G e$ in $[v,R(v)]$ is $\ell_e(R)$.

Let $e$ be an oriented edge of $T$.  Then the {\em exponent-sum} of $e$ in $R$ is the number $\sigma_e(R)$ of edges in $\<R\>\backslash A$ that are images of $\G$-translates of $e$, {\em counted with multiplicity}.  Equivalently, if $v$ is a vertex of $A$, then $\sigma_e(R)$ is the number of oriented edges from $\G e$ in the oriented geodesic $[v,R(v)]$, minus the number of oriented edges from $\G e$ in the oriented geodesic $[R(v),v]$. Note that $\sigma_e(R)\equiv\ell_e(R)~\mathrm{mod}~2$.

In the particular case where $\G$ is the group of an orientable hyperbolic surface, with the tree action arising from a splitting of the surface along
an essential closed curve $\a$, then $\ell_e(R)$ is the minimal intersection number of $\a$ with any closed curve representing a conjugate of $R$, while $\sigma_e(R)$ is the algebraic intersection number of $\a$ with (any closed curve representing) $R$.

\medskip
We first note an easy special case of the Freiheitssatz.

\begin{Proposition}\label{freeproduct}
Let $\G=\pi_1(\mathcal{G},X)$ be a limit group expressed as the fundamental group of a graph of groups.  Let $e$ be an edge of $X$ such that the corresponding edge group is trivial, and let $A=\pi_1(\mathcal{G},Y)$ where $Y$ is one component of $X\sm\{e\}$.  Let $R\in\G$ be any element involving $e$, in the sense that $\ell_e(R)>0$.  Then the natural map $A\to \G/N(R)$ is injective.
\end{Proposition}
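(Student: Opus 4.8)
The plan is to reduce to a one-edge splitting of $\G$ and then appeal to the Freiheitssatz for one-relator products of locally indicable groups. Collapsing each component of $X\sm\{e\}$ to a point converts $(\mathcal{G},X)$ into a one-edge graph of groups in which the edge group is still trivial and $A=\pi_1(\mathcal{G},Y)$ is a vertex group; a one-edge graph of groups with trivial edge group is precisely a free product, so $\G=A*B$, where $B=\pi_1(\mathcal{G},Z)$ with $Z$ the other component if $e$ separates $X$, and $B$ is infinite cyclic generated by the stable letter if $e$ does not separate $X$. Collapsing the graph of groups collapses the Bass--Serre tree $\G$-equivariantly, and the hypothesis $\ell_e(R)>0$ then says exactly that $R$ acts hyperbolically on the Bass--Serre tree of $\G=A*B$; in particular $R$ is not conjugate into the vertex group $A$.

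First I would dispose of the case in which $R$ is conjugate into $B$. By the previous paragraph this forces us into the non-separating situation, where $B\cong\Z$; replacing $R$ by a conjugate (which leaves $N(R)$ unchanged) we may then assume $R\in B$, and the standard presentation of $\G/N(R)$ splits as a free product of the presentation of $A$ with that of $B/M$, where $M$ is the normal closure of $R$ in $B$. Thus $\G/N(R)\cong A*(B/M)$, into which $A$ embeds. In the remaining case $R$ is conjugate into neither free factor. Here I would use that limit groups are locally indicable: every finitely generated subgroup of a limit group is again a limit group, and every nontrivial limit group has positive first Betti number (this follows by a Mayer--Vietoris induction along a cyclic hierarchy for $\G$, the base cases being free groups, free abelian groups and surface groups). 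Hence the subgroups $A$ and $B$ of $\G$ are locally indicable, and the Brodskii--Howie Freiheitssatz for one-relator products of locally indicable groups, applied to $\G=A*B$ with $R$ not conjugate into a factor, shows that $A\to(A*B)/N(R)=\G/N(R)$ is injective.

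As the word ``easy'' in the statement suggests, no genuine obstacle arises; the only points deserving care are the translation of the combinatorial hypothesis $\ell_e(R)>0$ into the assertion that $R$ is not conjugate into $A$, and the separate (but essentially trivial) treatment of the degenerate case in which the relevant side $B$ of the splitting is merely infinite cyclic and $R$ is a power of the stable letter.
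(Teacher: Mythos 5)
Your proof is correct, and its skeleton is the same as the paper's: collapse the components of $X\sm\{e\}$ so that $\G$ becomes a one-edge splitting with trivial edge group, i.e.\ $\G=A*B$ with $B=\pi_1(\mathcal{G},Z)$ in the separating case and $B\cong\Z$ generated by the stable letter otherwise; note that $\ell_e(R)>0$ forces $R$ to be hyperbolic on the (collapsed) Bass--Serre tree, hence not conjugate into $A$; then quote a known Freiheitssatz for one-relator products. The only real difference is the black box invoked at the end. The paper observes that $A$ and $B$, being subgroups of the residually free group $\G$, are residually free, and cites the Baumslag--Pride extension of the Freiheitssatz \cite{BP}; you instead use local indicability of the factors together with the Brodskii--Howie Freiheitssatz for one-relator products of locally indicable groups --- a result that is in any case available in the paper's bibliography as \cite{nonsing} and is used by the paper itself in the proof of Theorem \ref{expsumzero}, so this route is equally legitimate. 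Two small remarks. First, your justification of local indicability is heavier than necessary: you do not need the subgroup theorem for limit groups or a Mayer--Vietoris/Betti-number induction, since any nontrivial residually free group admits a nontrivial homomorphism to a free group and hence surjects onto $\Z$, which already gives local indicability of every subgroup of $\G$ in one line. Second, your explicit treatment of the degenerate case in which $R$ is conjugate into the $\Z$ factor (a power of the stable letter) is a point of care the paper's wording glosses over, since the quoted Freiheitssätze are usually stated for $R$ not conjugate into either factor; in that case $\G/N(R)\cong A*(\Z/n)$ and injectivity of $A$ is immediate, exactly as you say.
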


\begin{proof}
If $e$ disconnects $X$, then $\G=A*B$, where $B=\pi_1(\mathcal{G},Z)$ with $X\sm\{e\}=Y\sqcup Z$.  Now $A$ and $B$ are residually free.  Moreover by hypothesis $R\in A*B$ is not conjugate to an element of $A$ or of $B$. The result follows in this case by a theorem of Baumslag and Pride \cite{BP}.

A similar argument holds if $e$ does not disconnect $X$. Here $\G\cong A*\Z$, $A$ is residually free (as, of course, is $\Z$), and $R\in A*\Z$ is not conjugate to an element of $A$.  The result again follows from \cite{BP}.
\end{proof}

\section{Exponent sum zero}\label{es0}

Some versions of the Freiheitssatz for free products with amalgamation
where the relator is a proper power or satisfies some small-cancellation
condition have been proved in \cite{FRR,Juh1,Juh3}.  Similar results
for HNN extensions appear in \cite{Juh2}.

In \cite[Proposition 3.10]{How}, a version of the Freiheitssatz was proved in the case where 
$\G$ is a surface group and the exponent-sum of $e$ in $R$ is zero.  Here we extend this result to the general case of a $1$-acylindrical graph of groups
decomposition of a limit group.

\begin{Theorem}\label{expsumzero}
Let $\G$ be a limit group expressed as the fundamental group of a $1$-acylindrical graph of groups $(\mathcal{G},X)$, let $e$ be an edge of $X$ with cyclic edge group, and let $R\in\G$ be an element such that $\ell_e(R)>0=\sigma_e(R)$.  Let $A=\pi_1(\mathcal{G},Y)$, where $Y$ is one component of $X\sm\{e\}$.  Then the natural homomorphism $A\to\G/N(R)$ is injective.
\end{Theorem}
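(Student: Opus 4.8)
The plan is to set up a tower of HNN-type extensions that "unrolls" the relator along the edge $e$, exploiting the hypothesis $\sigma_e(R)=0$, and then to feed the problem into the exponent-sum-zero trick familiar from classical one-relator theory (Magnus's original induction). Concretely, since $\sigma_e(R)=0$ we may pass to an infinite cyclic cover corresponding to the edge $e$: form the subgroup $\G_e$ of $\G/N(R)$ generated by the conjugates $t^k A_e t^{-k}$ of the relevant vertex/edge pieces, where $t$ is the stable letter (or the conjugating element) associated with $e$, and observe that because $R$ has total exponent sum zero in $e$, a conjugate of $R$ lies in a finite "window" $\G_e^{[m,n]} := \langle t^k(\cdots)t^{-k}\mid m\le k\le n\rangle$ of this cover. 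One builds $\G_e^{[m,n]}$ itself as the fundamental group of a \emph{new} graph of groups $(\mathcal{G}',X')$ — a linear amalgam of $n-m+1$ copies of the $\G\sm e$ pieces glued along copies of the cyclic edge group — and checks that this new graph of groups is again $1$-acylindrical (using the remark in \S\ref{limgps} that $1$-acylindricity passes to invariant subtrees and to collapses of invariant forests) and that its vertex groups are still limit groups. Crucially, in this unrolled picture $R$ now has strictly smaller $e$-length with respect to some edge of $X'$, because one translate of $e$ has been "spent" at each end of the window.

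The key steps, in order, are: (i) reduce to the case $R$ hyperbolic and cyclically reduced with respect to the splitting along $e$ (if $R$ already fixes a vertex or $\ell_e(R)=0$ there is nothing to do, or the statement is vacuous); (ii) construct the infinite cyclic cover $\G_e$ and the finite window graph of groups $(\mathcal{G}',X')$ carrying a conjugate $\tilde R$ of $R$, verifying $1$-acylindricity and the limit-group property of vertex groups; (iii) identify inside $(\mathcal{G}',X')$ a sub-graph-of-groups $Y'$ that projects isomorphically onto the copy of $A$ we care about, so that $A\hookrightarrow \pi_1(\mathcal{G}',X')$ and it suffices to prove $A\hookrightarrow \pi_1(\mathcal{G}',X')/N(\tilde R)$; (iv) apply an inductive hypothesis on $\ell_e+\ell_f$ (or on total translation length) to the strictly-smaller-length relator $\tilde R$ inside the new splitting — here one should be able to invoke Theorem \ref{expsumtwo} or a two-edge version, since the two ends of the unrolled window give two distinguished edges $e',f'$ of $X'$ along which $\tilde R$ has positive length, while $A$ sits in a component of $X'\sm\{e',f'\}$; (v) conclude by pushing the embedding back down through the covering map, using that the Reidemeister–Schreier rewriting is compatible with the normal closure $N(R)$.

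I expect the main obstacle to be step (ii): making the passage to the cyclic cover genuinely rigorous at the level of graphs of groups rather than just for HNN extensions over a vertex. When $e$ is a non-separating edge the cover is a clean "doubly infinite amalgam" and the window $(\mathcal{G}',X')$ is transparent; but when $e$ is separating, or when the graph $X$ has other edges interacting with the $\G e$-orbit, one must be careful about which pieces of $X\sm e$ get duplicated and how the other edge groups lift, and one must re-derive $1$-acylindricity of the cover by hand from the $1$-acylindrical action of $\G$ on its Bass–Serre tree $T$ (the cover corresponds to a specific $\G_e$-invariant subtree of $T$, or of a $\Z$-cover of $T$, and one collapses an invariant forest to land on $X'$). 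A secondary subtlety is ensuring that the vertex groups of $(\mathcal{G}',X')$ are still limit groups: finite amalgams of limit groups over cyclic subgroups need not be limit groups in general, so one must check that the particular amalgams arising here are — for instance by exhibiting them as subgroups of $\G$ itself, via the covering, which are finitely generated and hence limit groups since subgroups of limit groups are limit groups.
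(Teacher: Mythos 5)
Your outline matches the paper's strategy only in the non-separating case, and the point you flag as the ``main obstacle'' in step (ii) is in fact a genuine gap rather than a technical annoyance. When $e$ separates $X$ there is \emph{no} infinite cyclic cover corresponding to $e$: collapsing the other edges gives $\G=A\ast_C B$, any cyclically reduced word crosses translates of $e$ in alternately opposite directions, so $\sigma_e\equiv 0$ on all of $\G$, there is no stable letter $t$ associated to $e$, and your window construction has nothing to unroll along. The paper's proof has to manufacture a $\Z$-quotient elsewhere: writing $R=u_1v_1\cdots u_kv_k$ and reducing to the case where $B$ is generated by $b,v_1,\dots,v_k$, one seeks an epimorphism $\phi:B\to\Z$ with $\phi(b)=0=\phi(v_1\cdots v_k)$, extends it by $0$ on $A$, and unrolls the kernel of $\phi$ into a finite amalgam in which the rewritten relator is shorter. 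The existence of such a $\phi$ is not automatic, and the case where it fails is handled by an entirely different mechanism: $B$ is replaced by a rank-$2$ free quotient $F$, the system $\psi(R)=1$, $\psi(b)=a$ becomes a nonsingular system of equations over the locally indicable group $A$, and one invokes \cite{nonsing}. Neither the construction of $\phi$ from the limit-group structure of $B$ nor this equations-over-groups fallback appears in your plan, and the separating case cannot be avoided: it is also what the paper falls back on inside the non-separating induction when the unrolled window has length one (where the $e$-length does not drop), exactly the situation you gloss over in step (iv).

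Two further problems. First, invoking Theorem \ref{expsumtwo} in step (iv) is circular as stated: that theorem is proved later and its proof begins by reducing to Theorem \ref{expsumzero} whenever an exponent sum vanishes --- which is precisely the situation in your window, where every edge is separating and every exponent sum is zero. The paper instead applies the inductive hypothesis of Theorem \ref{expsumzero} itself to the last edge of the window, after verifying that the rewritten relator has strictly smaller length there; if you want to route through a two-edge statement you must set up an explicit joint induction, which you have not done. Second, your induction has no base case: unrolling cannot push $\ell_e(R)$ below $2$, and the case $\ell_e(R)=2$ needs a separate argument. In the paper this is where $1$-acylindricity and the fact that $2$-generator subgroups of limit groups are free or abelian are used, to show that $a,u$ and $b,v$ generate rank-$2$ free subgroups, so that $\G/N(uv)$ is an amalgam of $A$ and $B$ over a free group of rank $2$ and injectivity is immediate. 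Your step (i) only dismisses the vacuous cases ($R$ elliptic or $\ell_e(R)=0$), not this initial case, so as written the induction never gets started.
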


\begin{proof}
If the edge group $\G_e$ of $e$ is trivial, then the result follows from 
Proposition \ref{freeproduct}, so we may suppose that $\G_e$ is infinite
cyclic.  We argue by induction on $\ell_e(R)$.

\medskip
For the initial case $\ell_e(R)=2$ of the induction, we distinguish two cases.
First suppose that $e$ separates $X$, so that $X\sm\{e\}=Y\sqcup Z$. Then $\G=A\ast_C B$ where $A=\pi_1(\mathcal{G},Y)$ and $B=\pi_1(\mathcal{G},Z)$
are limit groups, and $C=\G_e$ is cyclic.   Moreover, up to conjugacy, we have $R=uv$, with $u\in A\sm C$ and $v\in B\sm C$. Let $a\in A$ and $b\in B$ be generators of $C$, such that $a=b$ in $\G$.

Since $v\notin C$ and the splitting is $1$-acylindrical, $b$
and $v$ generate a nonabelian subgroup of the limit group $B$.  But $2$-generator subgroups of limit groups are free or abelian, so the subgroup $F_2$ of $B$ generated by $b,v$ is free on these two elements.  Similarly, $a,u$ freely generate a free subgroup $F_1$ of $A$.   Let $F$ be the quotient of $F_1\ast_C F_2$ by $R=uv$.  Then $F=\<u,a,b,v| uv=1,a=b\>$ is free of rank $2$, and each of the natural maps $F_i\to F$ ($i=1,2$) is an isomorphism.  On the other hand, $$\G/N(R)=A\ast_{F_1} F \ast_{F_2} B=A\ast_F B,$$ and it follows that the natural maps $A,B\to\G/N(R)$ are injective.

\medskip
Next, suppose that $e$ does not separate $X$, so that $Y=X\sm\{e\}$, and $\G$ is an HNN extension of the form $\< A, t|ta=bt\>$.  Moreover, up to conjugacy,
$R=utvt^{-1}$, where $u\in A\sm\<a\>$ and $v\in A\sm\<b\>$.  

In particular, $R$ is contained in the subgroup $H$ of $\G$ generated by $A$ and $B=tAt^{-1}$.  Moreover, $H\cong A\ast_C B$ with $C=\<b\>=\<tat^{-1}\>$, and $R$ is cyclically reduced of length $2$ in $A\ast_C B$.  By the separating case, we see that the maps $A\to H/N(R)$, $B\to H/N(R)$ are injective.  Finally, $\G/N(R)$ is the HNN extension $\<H/N(R),t|tAt^{-1}=B\>$ of $H/N(R)$, so the natural map $A\to H/N(R)\to\G/N(R)$ is injective, as claimed.

\medskip
The inductive step is not essentially more complex than the initial case. Again we distinguish the separating and non-separating cases.

If $e$ is separating, then $\G=A\ast_C B$ where $A,B$ are limit groups and
$C=\<a\>=\<b\>$ is infinite cyclic.  Up to conjugacy, we have $R=u_1v_1\cdots u_kv_k$, with $u_i\in A\sm C$ and $v_i\in B\sm C$ for each $i$.  We may replace $A$ by the subgroup $A_0$ generated by $a,u_1,\dots,u_k$, and $B$ by the subgroup generated by $b,v_1,\dots,v_k$, for if the result holds in this case then $$\G/N(R)=A\ast_{A_0}(A_0\ast_C B_0)/N(R)\ast_{B_0} B,$$ and the general result follows.

Hence, without loss of generality, $B$ is generated by $b,v_1,\dots,v_k$.
Suppose that there is no epimorphism $\phi:B\to\Z$ with $\phi(b)=0=\phi(v_1\cdots v_k)$.  Nevertheless, there is an epimorphism
$\psi:B\to F$ from $B$ onto a nonabelian free group $F$, so the nonexistence of $\phi$ means that $F$ has rank $2$ and that $\psi(b)$ and $\psi(v_1\cdots v_k)$
are linearly independent modulo the commutator subgroup $[F,F]$. Let $\{x,y\}$ be a basis for $F$.  The map $\psi$ extends to a homomorphism $\psi:A\ast_C B\to A\ast_C F$ which is the identity on $A$ and $\psi$ on $B$.  The set of equations $\psi(R)=1,\psi(b)=a$ in $x,y$ over $A$ is nonsingular, since $\psi(b)$ and $\psi(v_1\cdots v_k)$ are linearly independent modulo $[F,F]$.  Moreover, $A$ is locally indicable.  It then follows from \cite{nonsing} that the natural map $A\to (A\ast_C F)/N(\psi(R))$ is injective. Since this map factors through the natural map $A\to (A\ast_C B)/N(R)$, the latter is also injective, as required.

\medskip
Hence we may assume that we have an epimorphism $\phi:B\to\Z$ with $\phi(b)=0=\phi(v_1\cdots v_k)$.  We may extend this to an epimorphism $\phi:A\ast_C B\to\Z$ by defining $\phi(\a)=0$ for all $\a\in A$.  The kernel $K$ of this homomorphism can be expressed as an infinite amalgamated product of $K_B=K\cap B$ with $A_n=t^n At^{-n}$ for $n\in\Z$ (where $t\in B$ is a choice of element with $\phi(t)=1$).  Here the cyclic subgroup $C_n=\<t^n at^{-n}\><A_n$ is amalgamated with $\<t^n bt^{-n}\><K_B$, for each $n$.  Up to conjugacy, we may rewrite $R$ as a word in $H:=K_B\ast_{C_0} A_0 \cdots \ast_{C_m} A_m$ for some $m\ge 0$ (which we may assume chosen to be least possible).  Analysis of this rewriting of $R$ shows that $m=0$ only if $\phi(v_i)=0$ for each $i$, which would contradict the combination of hypotheses that $b,v_1,\dots v_k$ generate
$B$, that $\phi(b)=0$, and that $\phi(B)=\Z$.   Hence $m>0$. Define $H_0:=K_B\ast_{C_0} A_0 \cdots \ast_{C_{m-1}} A_{m-1}$ and $H_1:=K_B\ast_{C_1} A_1 \cdots \ast_{C_m} A_m$. Again, analysis of the rewrite of $R$ shows that, as a word in $H=H_0\ast_{C_m} A_m$, it has length less than $2k=\ell_e(R)$, so that the natural map $H_0\to H/N(R)$ is injective by inductive hypothesis. Similarly, $H_1\to H/N(R)$ is injective.  Finally, conjugation by $t$ within $\G$ induces an isomorphism between $H_0$ and $H_1$, and $\G/N(R)$ can be expressed as an HNN extension $\<H,t|tH_0t^{-1}=H_1\>$. It follows that the natural map $A=A_0\to H_0\to\G/N(R)$ is injective, as required. 

 This completes the inductive step in the separating case.

\medskip
Finally, suppose that $e$ is nonseparating, so that $\G=\<A,t|ta=bt\>$. As in the initial step, $R$ belongs to the normal closure of $A$ in $\G$. In general, however, it will not belong to the subgroup generated by $A_0=A$ and $A_1=tAt^{-1}$.  However, up to conjugacy, it belongs to the subgroup $H$ generated by $A_n:=t^nAt^{-n}$ for $0\le n\le m$, for some $m\ge 1$.  As above, we make all choices so that $m$ is least possible, and define $H_0,H_1$ to be, respectively, the subgroups generated by $A_n$ for $0\le n\le m-1$ and by $A_n$ for $1\le n\le m$. 

If $m>1$, then the length of $R$, as a word in $H=H_0\ast_{C} A_m$ is strictly less than $\ell_e(R)$, so the inductive hypothesis tells us that $H_0\to H/N(R)$ is injective.  If $m=1$, then the length of $R$ in $A_0\ast_C A_1$ is equal to $\ell_e(R)$, so we cannot apply the inductive hypothesis.  However, we can apply the separating case which we have dealt with above.  Hence in all cases $H_0\to H/N(R)$ is injective.

Similarly, $H_1\to H/N(R)$ is injective.  Finally, $\G/N(R)$ is an HNN extension of $H/N(R)$, with associated subgroups $H_0$ and $H_1$ and stable letter $t$, and so $A\to H_0\to \G/N(R)$ is injective, as required.

This completes the inductive step in the nonseparating case, and hence the
proof of the theorem.
\end{proof}

\section{Exponent sum one}\label{es1}

In this section we prove a version of the Freiheitssatz when the exponent-sum
of the missing edge in the defining relator is $\pm 1$, using techniques developed in Klyachko's proof of the Kervaire conjecture for torsion-free
groups \cite{Klya} in the formulation of Fenn and Rourke \cite{FR}.

In our proof we will need the following result, which seems of independent interest. 

\begin{Theorem}\label{freeprod}
Let $\G$ be a limit group, $\G=A\ast_CB$ a $1$-acylindrical splitting over a cyclic group $C$, and $\gamma\in\G\sm A$.  Then the subgroup of $\G$ generated by $A$ and $\gamma$ is a free product of $A$ and $\<\gamma\>$.
\end{Theorem}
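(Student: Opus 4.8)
The plan is to exploit the action of $\G$ on the Bass--Serre tree $T$ of the splitting $\G=A\ast_CB$. We may assume $C$ is infinite cyclic: if $C=1$ the argument below simplifies, since the edge stabilisers that obstruct the ping-pong step are then absent. Write $v$ for the vertex of $T$ fixed by $A$ and $e$ for the edge of $T$ with stabiliser $C$, and let $M$ denote the min-set of $\gamma$ (its fixed subtree $\mathrm{Fix}(\gamma)$ if $\gamma$ is elliptic on $T$, its axis if $\gamma$ is hyperbolic). Since $\gamma\notin A$ we have $\gamma v\neq v$; and since $\G$ is torsion-free and the action is $1$-acylindrical, no nontrivial element of $\G$ fixes a segment of $T$ of length greater than $1$, so in particular $\gamma^n\notin A$ for every $n\neq0$.

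The first thing I would establish is the auxiliary fact that $\<C,\gamma\>=C\ast\<\gamma\>$ whenever $\gamma\notin C$. The subgroup $\<C,\gamma\>$ is generated by two elements of the limit group $\G$, hence is free or free abelian. It cannot be abelian: then a generator $c$ of $C$ would commute with $\gamma$ and so preserve $M$; being elliptic and of infinite order, $c$ would fix $M$ pointwise if $\gamma$ is hyperbolic, and if $\gamma$ is elliptic $c$ would fix a point of $\mathrm{Fix}(\gamma)$ and therefore a segment of length at least $2$ running through $e$ --- unless $\gamma$ fixes an endpoint of $e$, but then $\gamma$ would preserve $\mathrm{Fix}(c)=e$ and so lie in $C$. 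Each possibility contradicts $1$-acylindricity or the hypothesis $\gamma\notin C$. Hence $\<C,\gamma\>$ is free, of rank exactly $2$ (rank $\le1$ would again make it abelian), and since a free group of rank $2$ generated by two elements is freely generated by them, $\<C,\gamma\>=\<c\>\ast\<\gamma\>=C\ast\<\gamma\>$. The same conclusion applies with any conjugate of $C$ in place of $C$, which is needed once the splitting is re-based.

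Now put $m=d(v,M)$. If $m\ge2$ I would argue by ping-pong in $T$. Let $\varepsilon$ be the last edge of the geodesic $[v,M]$, meeting $M$ at $p$; let $X_A$ be the component of $T\sm\varepsilon$ containing $v$ and set $X_\gamma=\bigcup_{n\neq0}\gamma^nX_A$. Because $m\ge2$, a nontrivial element of $A$ fixes at most the initial edge of $[v,p]$ (otherwise it fixes a segment of length $2$), and no nontrivial power of $\gamma$ fixes that initial edge either (here one uses torsion-freeness and commutative transitivity of limit groups); consequently the translates $\gamma^nv$ leave $M$ in pairwise distinct directions, and one checks that $X_A\cap X_\gamma=\emptyset$, that $aX_\gamma\subseteq X_A$ for all $a\in A\sm1$, and that $\gamma^nX_A\subseteq X_\gamma$ for all $n\neq0$. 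Since $A$ and $\<\gamma\>$ are infinite, the ping-pong lemma for free products then gives $\<A,\gamma\>=A\ast\<\gamma\>$.

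The essential case is $m\le1$, i.e.\ $m=1$. Replacing $\gamma$ by $a_0\gamma a_1$ with $a_0,a_1\in A$ changes neither $\<A,\gamma\>$ nor --- since $A\ast\<\gamma\>$ is a Hopfian group (being a limit group) and the endomorphism of $A\ast\<\gamma\>$ that fixes $A$ and sends $\gamma$ to $a_0\gamma a_1$ is surjective, hence an automorphism --- the truth of the conclusion; this freedom also subsumes re-basing the splitting along an edge in the $A$-orbit of $e$. So we may assume $\gamma$ is a shortest representative of the double coset $A\gamma A$; then either $\gamma\in B\sm C$, or $\gamma$ is hyperbolic and cyclically reduces to $a_1b_2\cdots a_{k-1}(b_kb_1)$ for some $k\ge2$, where after a further replacement $\gamma\mapsto c\gamma c'$ ($c,c'\in C$) we may take $b_kb_1\notin C$ --- so that every $\gamma^n$ ($n\neq0$) has a normal form in $A\ast_CB$ beginning and ending with a $B$-syllable. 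Suppose some cyclically reduced word $g_1\gamma^{n_1}\cdots g_r\gamma^{n_r}$ of $A\ast\<\gamma\>$ (with $g_i\in A\sm1$, $n_i\neq0$, $r\ge1$) represents the identity of $\G$. If every $g_i$ lies in $C$, the word lies in $\<C,\gamma\>=C\ast\<\gamma\>$ and is reduced there of length $2r$, a contradiction; otherwise, permuting it cyclically to start at some $g_i\notin C$ and grouping it into blocks $g_i\beta$ --- with $\beta$ the product of the intervening $\gamma$-powers and central $C$-syllables --- one has each $\beta\in\<C,\gamma\>=C\ast\<\gamma\>$ reduced there, starting and ending with a $\gamma$-power and hence not lying in $C$; the aim is to deduce, via $1$-acylindricity (which gives $C\cap b_kCb_k^{-1}=C\cap b_1^{-1}Cb_1=1$), that the $A\ast_CB$-normal form of each $\beta$ still begins and ends with a $B$-syllable, so that the word reads as a reduced word of length at least $2$ in $A\ast_CB$ and cannot be trivial. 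The main obstacle is precisely this last deduction: a central syllable $g_i\in C\sm1$ lodged between two $\gamma$-powers may multiply with the adjoining $B$-syllables back into $C$ and so trigger cancellation propagating into both $\gamma$-powers, and controlling how far this collapse can reach seems to require a secondary induction on the translation length of $\gamma$, parallel to the inductive scheme in the proof of Theorem \ref{expsumzero}.
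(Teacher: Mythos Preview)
Your proposal contains a genuine gap, and you have correctly located it yourself in the final paragraph: in the case $m=1$ the normal-form argument does not control the cancellation that occurs when a syllable $g_i\in C\sm 1$ is lodged between two $\gamma$-powers. Concretely, with $\gamma=b_1a_2b_2\cdots a_kb_k$ beginning and ending in $B$-syllables, the product $\gamma c\gamma$ has middle segment $b_kcb_1$, and nothing you have arranged excludes $b_kcb_1\in C$; if that happens the collapse propagates to $a_kc'a_2$, and so on. The $1$-acylindricity consequence you invoke, $C\cap b_kCb_k^{-1}=1$, only says $b_kcb_k^{-1}\notin C$ for $c\ne 1$; it does not say $b_kcb_1\notin C$. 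Your instinct that a secondary induction on the translation length of $\gamma$ is required is sound, but as written the argument is incomplete. (There is also a minor lacuna: the hyperbolic case $m=0$, where the $A$-vertex lies on the axis, is not covered by ``$m\le 1$, i.e.\ $m=1$''.)

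The paper avoids normal-form bookkeeping entirely and stays with ping-pong throughout, inducting instead on $d=d(u,\gamma(u))$ where $u$ is the $A$-vertex. After adjusting $\gamma$ within $A\gamma A$ so that $e$ and $\gamma(e)$ are the first and last edges of $[u,\gamma(u)]$, the delicate situation is exactly yours: $\gamma$ hyperbolic with axis $L_\gamma$ passing through the adjacent $B$-vertex $v$ but not through $u$. The paper then argues a dichotomy. Either some $c^k$ with $k\ne 0$ carries one edge of $L_\gamma$ at $v$ to another, in which case one of $\gamma^{\pm 1}c^k$ is elliptic or has strictly smaller translation length with neither $u$ nor $v$ on its axis, so the already-handled cases apply to it and $A*\<\gamma^{\pm 1}c^k\>=A*\<\gamma\>$; or else the translates $c^k(L_\gamma)$ meet $L_\gamma$ only at $v$, and one checks that the edge $e$ then lies outside the minimal $F$-invariant subtree $T_F$ for $F=\<c,\gamma\>$. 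In the second alternative ping-pong on the two components of $T\sm\{e\}$ gives $\<A,F\>=A*_CF=A*_{\<c\>}\<c,\gamma\>=A*\<\gamma\>$, using precisely your auxiliary fact $\<c,\gamma\>=\<c\>*\<\gamma\>$. The replacement $\gamma\mapsto\gamma^{\pm 1}c^k$ is the geometric incarnation of the ``secondary induction'' you were reaching for, and it sidesteps the syllable-cancellation problem altogether.
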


\begin{proof}
Let $T$ be the Bass-Serre tree of the splitting. Then $C$ is the stabiliser of an edge $e$ of $T$, and $A,B$ are the stabilisers of vertices $u,v$ respectively
that are incident to $e$.  We argue by induction on the distance $d=d(u,\gamma(u))$ (in the edge-path metric on $T$).  Note that $d$ is even since $T$ has a $\G$-invariant bipartite structure, and that $d>0$ since $\gamma\notin A=\mathrm{Stab}(u)$.

Now the first edge of the geodesic segment $\rho=[u,\gamma(u)]$ is incident to $u$, so is $\alpha_1(e)$ for some $\alpha_1\in A$, while the last edge is incident to $\gamma(u)$ so is $\gamma\alpha_2(e)$ for some $\alpha_2\in A$.
Replacing $\gamma$ by $\alpha_1^{-1}\gamma\alpha_2^{-1}\in A\gamma A$, we may assume that $e,\gamma(e)$ are the first and last edges in $\rho$.

In the initial case of the induction, $d=2$ and the midpoint of $\rho$
is the vertex $v$ stabilised by $B$.  Moreover, since the first edge
$\gamma(e)$ of $\gamma(\rho)$ coincides with the second edge of $\rho$,
it follows that $\gamma$ fixes the midpoint $v$ of $\rho$, so $\gamma\in B$.
If $c$ is a generator for $C=\mathrm{Stab}(e)$, then $\gamma,c$ do not
commute by the $1$-acylindrical property.  Since $\G$ is a limit group,
$\gamma,c$ generate a nonabelian free subgroup $F$ of $B$.

Now consider the $2$-component forest obtained from $T$ by removing the
edge $e$.  Let $T_u$, $T_v$ be the components of this forest that contain
$u$, $v$ respectively.  Since every element of $A\sm C$ maps $T_v$ into
$T_u$, and every element of $F\sm C$ maps $T_u$ into $T_v$, the Ping-Pong
Lemma shows that the subgroup of $\G$ generated by $A\cup F$ is the
amalgamated free product $$A\ast_C F= A\ast_{\<c\>}\<c,\gamma\>=A\ast\<\gamma\>,$$ as required.

Suppose then that $d\ge 4$.  If $\gamma$ is elliptic, then it must fix
the midpoint $x$ of $\rho$.  Since $d(u,x)=d/s\ge 2$ and the $\G$-action
is $1$-acylindrical, no nontrivial element of $A$ fixes the last edge
$f$ of the geodesic segment $\tau=[u,x]$.  If some power $\gamma^k$ of $\gamma$
fixes $f$, then $\gamma^k$ also fixes $\gamma(f)$, which is the last edge of the segment $\gamma(\tau)=[\gamma(u),x]$.  In particular $\gamma(f)\ne f$.  Since the action is $1$-acylindrical, and $\gamma^k$ fixes two distinct edges, then $\gamma^k=1$ so $k=0$.  Hence no nontrivial power of $\gamma$ fixes $f$.  In this situation we may again apply the Ping-Pong Lemma to the two components of $T\sm\{f\}$: nontrivial elements of $A$ map the component containing $x$ into the component containing $u$, while nontrivial elements of $\<\gamma\>$ map the component containing $u$ into the component containing $x$.  Hence $A\cup\<\gamma\>$ generates a free product $$A\ast\<\gamma\>\subset\G,$$ as required.

We may therefore assume that $\gamma$ is hyperbolic, with axis $L_\gamma$,
say.  In this case the segment $\rho$ meets $L_\gamma$ in a central
subsegment of length at most $d-2$.  (The edges $e,\gamma(e)$ of $\rho$
cannot lie on $L_\gamma$ since they both lie on a geodesic from $u$ to $\gamma(u)$.)  If the shortest path $P$ from $u$ to $L_\gamma$ contains an edge $f\ne e$, then we may apply the Ping-Pong Lemma to the components of $T\sm\{f\}$ as above to see that $A$ and $\<\gamma\>$ generate their free product in $\G$.

Finally, suppose that $\gamma$ is hyperbolic, with $v\in L_\gamma$ and $u\notin L_\gamma$.  If $c$ is a generator of $C$, then $c$ and $\gamma$ do not commute, by the $1$-acylindrical property.  Hence they
freely generate a nonabelian free subgroup $F$ of $\G$.  

We may assume that $L_\gamma\cap c^k(L_\gamma)=\{v\}$ for all $k\ne 0$.
For $c^k$ fixes $e\notin L_\gamma$, so by the $1$-acylindrical property
$c^k$ does not fix any edge
of $L_\gamma$.  Hence if $L_\gamma$ and $c^k(L_\gamma)$ share an edge,
then there are distinct edges $f,c^k(f)$ of $L_\gamma$ which are
incident at $v$.  It follows that one of $\gamma^{\pm 1}c^k$ is either
elliptic or has translation length less than that of $\gamma$, with 
neither $u$ nor $v$ contained in its axis.  Applying the above argument
shows that the subgroup generated by $A$ and $\gamma^{\pm 1}c^k$ is 
$$A*\<\gamma^{\pm 1} c^k\>=A*\<\gamma\>.$$

Let $\rho_0\subset L_\gamma$ denote the geodesic segment $[v,\gamma(v)]$.  Then the union of the translates $\phi(\rho_0)$ of $\rho_0$ for $\phi\in F$ form an $F$-invariant subtree 
$T_F$ of $T$.  We claim that $e\notin T_F$.  From this the result will again
follow from a Ping-Pong argument, for $e$ separates $T$ into two components,
$T_u\ni u$ and $T_v\supset T_F$.  Elements of $A\sm C$ map $T_v$ into $T_u$,
while elements of $F\sm C$ map $T_u$ into $T_v$, so the subgroup of $\G$
generated by $A$ and $F$ is $$A\ast_CF=A\ast_{\<c\>}\<c,\gamma\>=A\ast\<\gamma\>.$$

To prove the claim, note that the hypothesis that $L_\gamma\cap c^k(L_\gamma)=\{v\}$ for all
$k\ne 0$ means that the segments $\{c^k(\rho_0);k\in\Z\}$ and
$\{c^k\gamma^{-1}(\rho_0);k\in\Z\}$ are pairwise disjoint except for
their shared endpoint $v$.  
Orient $\rho_0$ from $v$ to $\gamma(v)$, and let $\rho_0^{-1}$ denote
$\gamma^{-1}(\rho_0)$ oriented from $v$ to $\gamma^{-1}(v)$.

It follows that, for a reduced word 
$$\phi=c^{k_0}\gamma^{\varepsilon_1}c^{k_1}\cdots\gamma^{\varepsilon_n}c^{k_n}$$
in $\{c,\gamma\}$, the concatenation of the $n$ paths
$$c^{k_0}\gamma^{\varepsilon_1}c^{k_1}\cdots\gamma^{\varepsilon_j}c^{k_j}(\rho_0^{\varepsilon_{j+1}}),~j=0,\dots,n-1,$$
is a reduced path from $v$ to $\phi(v)$.  In particular $d(v,\phi(v))=nd(v,\gamma(v))$, so we can have $e\in\phi(\rho_0)$ only
if $\phi\in\{c^k,c^k\gamma^{-1}\}$ for some $k\in\Z$.
But 
$$e\in c^k(\rho_0)\cup c^k\gamma^{-1}(\rho_0)\subset c^k(L_\gamma)\Rightarrow e=c^{-k}(e)\in L_\gamma,$$
a contradiction.

\end{proof}

\begin{Theorem} \label{expsumone}
Let $\G$ be a limit group expressed as the fundamental group of a $1$-acylindrical graph of groups $(\mathcal{G},X)$, let $e$ be an edge of $X$ with cyclic edge group, and let $R\in\G$ be an element such that $\ell_e(R)>1=\sigma_e(R)$.  Let $A=\pi_1(\mathcal{G},Y)$, where $Y=X\sm\{e\}$.  Then the natural homomorphism $A\to\G/N(R)$ is injective.
\end{Theorem}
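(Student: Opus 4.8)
The plan is to reduce to a non-separating edge and then to run a Klyachko-style picture argument. Since $Y=X\sm\{e\}$ is connected, $e$ does not disconnect $X$, so $\G$ is an HNN extension $\G=\<A,t\mid tat^{-1}=b\>$, where $a,b\in A$ are the two images of a generator of the cyclic edge group; this is consistent with $\sigma_e(R)=1\neq 0$, since a separating edge would force $\sigma_e(R)=0$. If the edge group is trivial we are done by Proposition \ref{freeproduct}, so we may assume $\<a\>$ and $\<b\>$ are infinite cyclic (limit groups being torsion-free). After conjugating $R$ in $\G$ and, if need be, re-orienting $e$, we may take $R=c_1t^{\varepsilon_1}\cdots c_\ell t^{\varepsilon_\ell}$ with $c_i\in A$, $\varepsilon_i=\pm1$, cyclically reduced in the HNN sense, $\ell=\ell_e(R)$ and $\sum_i\varepsilon_i=\sigma_e(R)=1$; since $\ell\equiv\sigma_e(R)~\mathrm{mod}~2$ we in fact have $\ell\ge 3$.

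Assume for contradiction that $A\to\G/N(R)$ is not injective, so that some $w\in A\sm\{1\}$ represents $1$ in $\G/N(R)$. Encoding a van Kampen diagram for $w$ over the relative presentation $\<A,t\mid\rho,R\>$, where $\rho:=tat^{-1}b^{-1}$, we obtain a non-empty reduced picture $P$ (on the disc, whose boundary reads $w$; or, in the usual way, a spherical picture on $S^2$). The fat vertices of $P$ are of two kinds: \emph{$R$-vertices}, incident to $\ell$ arcs labelled $t^{\pm1}$ with the syllables $c_i$ read off the corners between them, and \emph{$\rho$-vertices}, incident to two $t$-arcs with corners labelled $a$ and $b$; the complementary regions carry elements of $A$, consistent around each corner, each vertex, and the whole surface. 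The role of the hypotheses on $\G$ is to let us take $P$ reduced in a strong sense --- no cancelling pair of $R$-vertices, no cancelling pair of $\rho$-vertices, and no ``pinch'' along a $t$-arc --- because each such configuration would yield either an equation contradicting that $R$ is reduced with $\ell_e(R)\ge 2$, or a relation between an element of $A$ and a single further element of $\G$ of a shape excluded by Theorem \ref{freeprod} together with the $1$-acylindricity of the splitting (which force the pertinent subgroups $\<A,\gamma\>$, and the pertinent two-generator subgroups, to be free products, resp. free), or else a picture of strictly smaller complexity, against minimality.

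With $P$ so normalised I would apply the car argument of Klyachko in the Fenn--Rourke formulation \cite{Klya,FR}: orient every $t$-arc, let a Klyachko car traverse it, let the cars interact at the $R$- and $\rho$-vertices by the standard rules, and assign corner angles so that the total angle about each region and each vertex is controlled. The exponent-sum hypothesis $\sigma_e(R)=1$ is what drives the argument: each $R$-vertex forces a net ``turn'' of $+1$, whereas a $\rho$-vertex has $t$-exponent-sum $0$ and contributes nothing to offset it, and the $A$-labels of the regions are likewise neutral; summing these local contributions over $P$ forces the Euler characteristic of the underlying surface to be non-positive, contradicting $\chi(S^2)=2$. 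Hence $A\to\G/N(R)$ is injective.

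The main obstacle, I expect, is the presence of the $\rho$-vertices: Klyachko's method and the Fenn--Rourke machinery are set up for a single relator over a free product $A*\<t\>$, and one has to check that the extra HNN-relator discs can be absorbed into both the car dynamics and the angle bookkeeping without disturbing the decisive ``$+1$ per $R$-vertex'' count. Tightly bound up with this is the verification that $P$ may be taken strongly reduced, which is exactly the point at which the structural hypotheses --- $\G$ a limit group, the splitting $1$-acylindrical, Theorem \ref{freeprod} --- and the bound $\ell_e(R)\ge 2$ do their work, and whose failure is precisely what produces the known counterexamples to more sweeping Freiheitss\"{a}tze.
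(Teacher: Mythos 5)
Your reduction to the non-separating HNN case and your instinct to use Klyachko/Fenn--Rourke together with Theorem \ref{freeprod} and $1$-acylindricity are the right ingredients, but the proposal has a genuine gap exactly where you flag it. The Fenn--Rourke machinery does not apply to an arbitrary word $R=c_1t^{\varepsilon_1}\cdots c_\ell t^{\varepsilon_\ell}$ over the base $A$ with an auxiliary HNN relator $\rho=tat^{-1}b^{-1}$: it requires the relator to be in the amalgamated Klyachko form $(\alpha_1t^{-1}\beta_1t)\cdots(\alpha_kt^{-1}\beta_kt)\gamma t$, with the coefficients generating free products with the associated subgroups, and a word with $\sigma_e(R)=1$ written over $A$ will in general have consecutive stable letters of the same sign and so is not of this shape. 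Your plan to compensate by inserting $\rho$-vertices into the pictures and ``absorbing'' them into the car dynamics and angle bookkeeping is precisely the unproved step; you name it as the main obstacle and do not resolve it. Moreover, the mechanism you describe --- a net $+1$ per $R$-vertex contradicting $\chi(S^2)=2$ --- is not Klyachko's argument: a pure exponent-sum/Euler-characteristic count makes no use of $\ell_e(R)\ge 2$, of $1$-acylindricity, or of Theorem \ref{freeprod}, and no such count can prove the statement (these hypotheses are exactly what rule out the known counterexamples). Likewise, the ``strong reducedness'' of the picture is asserted to follow from Theorem \ref{freeprod}, but no argument is given.

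The paper's proof avoids all of this by changing the base before invoking Klyachko. Conjugate $R$ so that $Rt^{-1}$ (which has $e$-exponent sum $0$) lies in the subgroup $H_m$ generated by $A_n=t^nAt^{-n}$, $0\le n\le m$, with $m$ minimal; then $m\ge 1$ because $\ell_e(R)>1$, and $H_m=H_{m-1}\ast_{H_{m-2}'}H_{m-1}'$ where $H_k'=tH_kt^{-1}$. Rewriting $Rt^{-1}$ in reduced form for this amalgam and minimising the number of syllables over all conjugates produces a cyclic conjugate $R'=(\alpha_1t^{-1}\beta_1t)\cdots(\alpha_kt^{-1}\beta_kt)\gamma t$ with $k\ge 1$, $\alpha_i\in H_{m-1}\sm H_{m-2}$ and $\beta_i\in H_{m-1}\sm H_{m-2}'$ --- the minimality bookkeeping here plays the role you wanted reduced pictures to play. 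Theorem \ref{freeprod} then gives exactly the hypotheses Fenn--Rourke need, namely that $H_{m-2}$ and $\alpha_i$ generate $H_{m-2}\ast\<\alpha_i\>$ and $H_{m-2}'$ and $\beta_i$ generate $H_{m-2}'\ast\<\beta_i\>$, so their Theorem 4.1 applies verbatim to the system $R'=1$, $tht^{-1}=h^{\phi}$ ($h\in H_{m-2}$), showing $H_{m-1}$ embeds in $\<H_{m-1},t\mid tH_{m-2}t^{-1}=H_{m-2}',\,R=1\>$. Since $\G$ is the HNN extension $\<H_{m-1},t\mid tH_{m-2}t^{-1}=H_{m-2}'\>$, this gives injectivity of $A\to H_{m-1}\to\G/N(R)$. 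To complete your argument you would either have to carry out this rewriting step, or genuinely extend Klyachko's car argument to pictures with HNN-relator vertices, which is a substantial piece of work not contained in the proposal.
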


\begin{proof}
Since $\sigma_e(R)\ne 0$, the edge $e$ cannot be separating, so $\G$ is an HNN extension $\<A,t|ta=bt\>$ and $A<\G$ is a limit group.

Now $\sigma_e(Rt^{-1})=0$, so $Rt^{-1}$ belongs to the normal closure of $A$ in $\G$, which is generated by $A_n:=t^nAt^{-n}$ for $t\in\Z$.  Conjugating by a suitable power of $t$, we may assume that $Rt^{-1}$ belongs to the subgroup $H_m$ of $\G$ generated by $A_n$ for $0\le n\le m$, for some $m\ge 0$, and that moreover this integer $m$ is the least possible that arises from all conjugates of $R$.

Note that $m>0$, since otherwise $R$ is conjugate to $\alpha t$ for some
$\alpha\in A$, and then $\ell_e(R)=1$, contrary to hypothesis.

Define $H_{-1}:=\<a\>$, and $H_k':=tH_kt^{-1}$ for all $k\ge -1$. Then the minimality assumption means that $R\in H_m\sm H_{m-1}$, and similarly $R\in H_m\sm H_{m-1}'$.

Now $H_m$ is an amalgamated free product: $$H_m = H_{m-1} \ast_{H_{m-2}'} H_{m-1}'.$$

Write $Rt^{-1}$ in reduced form with respect to this amalgamated free product decomposition:
$$Rt^{-1}=\beta_0\gamma_1\beta_1\cdots \gamma_k\beta_k,$$
with $\beta_i\in H_{m-1}$, $\gamma_i\in H'_{m-1}\sm H'_{m-2}$, and
$\beta_1,\dots,\beta_{k-1}\notin H'_{m-2}$.
Then replace each letter $\gamma_i$ from $H_{m-1}'\sm H_{m-2}'$ by $t\alpha_it^{-1}$, where $\alpha_i=t^{-1}\gamma_it\in H_{m-1} \sm H_{m-2}$.
This allows us to write $R$ as a word 
$$R = (\beta_0 t \alpha_1 t^{-1}) \cdots (\beta_{k-1} t \alpha_k t^{-1}) \beta_k t$$ with $k\ge 0$, $\beta_i\in H_{m-1}$ for each $i$, $\alpha_i\in H_{m-1}\sm H_{m-2}$ for each $i$, and $\beta_1,\dots,\beta_{k-1}\notin H'_{m-2}$.
Among all such expressions, for all conjugates of $R$, choose one that
minimises $k$ (subject to our existing hypothesis on the minimality of $m$).  Then
\begin{enumerate}
\item $k\ge 1$, for if $k=0$ we have $Rt^{-1}=\beta_0\in H_{m-1}$ which
contradicts the minimality of $m$;
\item $\beta_k\notin H'_{m-2}$, for if $\beta_k\in H'_{m-2}$ then $t^{-1}\beta_kt\in H_{m-2}\subset H_{m-1}$, and so the expression
$$gRg^{-1}=(w t \alpha_1 t^{-1}) \cdots (\beta_{k-2} t \alpha_{k-1} t^{-1}) \beta_{k-1} t$$ with $g=\alpha_kt^{-1}\beta_kt\in H_{m-1}$ and $w=g\beta_0\in H_{m-1}$ contradicts the minimality of $k$.
\end{enumerate}

Now put $\gamma=\beta_0$, and replace $R$ by its cyclic conjugate
$$R'=(\alpha_1t^{-1}\beta_1t)\cdots (\alpha_kt^{-1}\beta_kt)\gamma t,$$
with $k\ge 1$, $\alpha_i\in H_{m-1}\sm H_{m-2}$ for each $i$,
$\beta_i\in H_{m-1}\sm H'_{m-2}$ for each $i$, and $\gamma\in H_{m-1}$.

By Theorem \ref{freeprod}, the subgroup of $H_{m-1}$ generated by $H_{m-2}$ and $\alpha_i$ is a free product $H_{m-2}\ast\<\alpha_i\>$ for each $i$, and similarly the subgroup generated by $H_{m-2}'$ and $\beta_i$ is a free product $H_{m-2}'\ast\<\beta_i\>$. Therefore by a result of Fenn and Rourke {\rm\cite[Theorem 4.1]{FR}} (see also {\rm\cite[Lemma 2]{Klya}}), the system of equations
\begin{eqnarray*}
R'=\left(\prod_{i=1}^{k}\alpha_it^{-1}\beta_i t\right)\gamma t &=& 1\\
h^{\phi} &=& tht^{-1}~~(~\forall~h\in H_{m-2})
\end{eqnarray*}
has a solution over $H_{m-1}$, where $\phi$ is the isomorphism $H_{m-2}\to H_{m-2}'$ induced by conjugation by $t$. This is equivalent to saying that the natural map $H_{m-1} \to \< H_{m-1}, t | t H_{m-2} t^{-1} = H_{m-2}', R=1\>$ is injective.

But, since $\G$ can be expressed as the HNN extension 
$$\< H_{m-1}, t | t H_{m-2} t^{-1} = H_{m-2}' \>,$$ this shows that the natural map $$A\to H_{m-1} \to \G/N(R)$$ is injective, as required.
\end{proof}

\section{Two edges}\label{2e}

In this section we prove the following.

\begin{Theorem}\label{expsumtwo}
Let $\G$ be a limit group expressed as the fundamental group of a $1$-acylindrical graph of groups $(\mathcal{G},X)$, let $e,f$ be edges of $X$ with cyclic edge groups, and let $R\in\G$ be an element such that $\ell_e(R)\ge 1\le\ell_f(R)$.  Let $A=\pi_1(\mathcal{G},Y)$, where $Y$ is one component of $X\sm\{e,f\}$.  Then the natural homomorphism $A\to\G/N(R)$ is injective.
\end{Theorem}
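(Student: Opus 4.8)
\medskip\noindent\emph{Proof strategy.}
The plan is to reduce the statement to Theorems \ref{expsumzero} and \ref{expsumone} by a case analysis on the exponent sums $\sigma_e(R),\sigma_f(R)$, treating the residual case by the inductive scheme used in the proof of Theorem \ref{expsumzero} --- the dichotomy ``pass to a $\Z$-cover, or solve a non-singular system of equations'' --- together with Theorem \ref{freeprod}. First, some reductions. If one of the edge groups, say $\G_f$, is trivial, then $\G$ splits along $f$ as a free product $P\ast Q$ or $P\ast\Z$ with $A\le P:=\pi_1(\mathcal G,Y_f)$, where $Y_f$ denotes the component of $X\sm\{f\}$ meeting $Y$; since $\ell_f(R)>0$, $R$ is not conjugate into a free factor, so $P\hookrightarrow\G/N(R)$ by \cite{BP} (exactly as in Proposition \ref{freeproduct}), hence so does $A$. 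So we may assume both edge groups are infinite cyclic. As in \S\ref{es0} we may also replace vertex and edge groups by the finitely generated subgroups they generate together with a fixed normal form of $R$ and the edge generators, and collapse redundant subtrees; $1$-acylindricity is preserved (see \S\ref{limgps}). We shall use repeatedly that $A=\pi_1(\mathcal G,Y)$ embeds in each of the one-edge Magnus subgroups $\pi_1(\mathcal G,Y_e)$ and $\pi_1(\mathcal G,Y_f)$, and that the Bass--Serre tree of a separating edge is bipartite, so $\ell$ and $\sigma$ are even on such an edge; in particular $|\sigma_e(R)|=1$ (or $|\sigma_f(R)|=1$) forces the edge to be non-separating.

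Now the case analysis. If $\sigma_e(R)=0$ then, since $\ell_e(R)>0$, Theorem \ref{expsumzero} applied with the edge $e$ gives $\pi_1(\mathcal G,Y_e)\hookrightarrow\G/N(R)$, and hence $A\hookrightarrow\G/N(R)$; the case $\sigma_f(R)=0$ is symmetric. If $|\sigma_e(R)|=1$ and $\ell_e(R)>1$ then, replacing $R$ by $R^{-1}$ if necessary, $\sigma_e(R)=1$ and $e$ is non-separating, so $Y_e=X\sm\{e\}$ and Theorem \ref{expsumone} applied with $e$ finishes the argument the same way; likewise for $f$. It remains to consider the case in which each of $e$ and $f$ has either $|\sigma|\ge 2$, or $|\sigma|=1$ with $\ell=1$.

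Consider first the base case $\ell_f(R)=1$ (the case $\ell_e(R)=1$ is symmetric). Then $\sigma_f(R)=\pm 1$ and $\G=\<B,t\mid tct^{-1}=c'\>$ is an HNN extension with $B=\pi_1(\mathcal G,X\sm\{f\})\supseteq A$, and $R$ is conjugate to $\beta t^{\pm1}$ for some $\beta\in B$. Killing $R$ therefore identifies $t$ with $\beta^{\mp1}$ and presents $\G/N(R)$ as $B/N(w)$ for a single element $w\in B$. Since the generators of the two associated subgroups of the $f$-HNN lie in vertex groups of $X$, they are elliptic for the $e$-splitting of $B$, so $w$ has $e$-exponent-sum $0$; and one checks, using $\ell_e(R)\ge 1$, that $w$ has positive $e$-length. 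Theorem \ref{expsumzero}, applied to $B$ with the edge $e$, then gives $A\hookrightarrow B/N(w)=\G/N(R)$, as required.

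We are thus reduced to the case $|\sigma_e(R)|\ge 2$ and $|\sigma_f(R)|\ge 2$ (so also $\ell_e(R),\ell_f(R)\ge 2$), which I would treat by induction on $\ell_e(R)+\ell_f(R)$, mimicking the inductive step in the proof of Theorem \ref{expsumzero}. Split $\G$ along $f$ (as an amalgam if $f$ separates, otherwise as an HNN extension), restrict the vertex group on the side away from $Y$ to a finitely generated subgroup, and split into two cases as in \S\ref{es0}: either (a) no epimorphism of that subgroup onto $\Z$ kills the relevant products --- whereupon the resulting system of equations over the locally indicable group $B$ is non-singular, its freeness hypotheses being supplied by Theorem \ref{freeprod}, and one concludes by \cite{nonsing} (or, when a single $t$-exponent of $\pm1$ survives, by \cite[Theorem 4.1]{FR}); or (b) such an epimorphism exists and extends to $\psi\colon\G\to\Z$ with $\psi(A)=0=\psi(R)$, whereupon one passes to the $1$-acylindrical graph of groups $\ker\psi$, writes a conjugate of $R$ over a finite sub-amalgam $H=H_0\ast_C H_m$, applies the inductive hypothesis to $H_0\hookrightarrow H/N(R)$ and $H_1\hookrightarrow H/N(R)$, and reassembles the HNN extension $\<H,t\mid tH_0t^{-1}=H_1\>$ to obtain $A=A_0\hookrightarrow H_0\hookrightarrow\G/N(R)$. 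The part I expect to be the main obstacle is precisely this last bookkeeping: one must show that $\ell_e(R)+\ell_f(R)$ strictly decreases under the rewriting in case (b) --- the minimality and length estimates of \S\ref{es0} now have to be run for the two edges $e$ and $f$ simultaneously --- and one must verify that the dichotomy (a)/(b) fits together cleanly with both the separating and the non-separating configurations of $e$ and $f$, and with the base cases treated above.
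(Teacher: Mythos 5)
Your reductions are fine: dispatching $\sigma_e(R)=0$ or $\sigma_f(R)=0$ by Theorem \ref{expsumzero}, and $|\sigma_e(R)|=1$ with $\ell_e(R)>1$ by Theorem \ref{expsumone} (using $A\le\pi_1(\mathcal G,X\sm\{e\})$), is legitimate and is a mild shortcut the paper does not take. Your base case $\min(\ell_e,\ell_f)=1$ is also a genuinely different route from the paper (which treats only $\ell_e=\ell_f=1$ directly, exhibiting $\G/N(R)$ as an amalgam or HNN extension over rank-two free subgroups), and it is correct --- but the parenthetical ``one checks, using $\ell_e(R)\ge1$, that $w$ has positive $e$-length'' conceals a real argument that must invoke $1$-acylindricity, not just $\ell_e(R)\ge1$. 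Indeed, with one vertex group $V$, loops $e,f$ giving $sa_es^{-1}=b_e$, $ta_ft^{-1}=b_f$ and $a_f=b_e$, the relator $R=s\gamma t$ ($\gamma\in V$) has $\ell_e(R)=1$ yet $w=\beta^{-1}a_f\beta b_f^{-1}=\gamma^{-1}a_e\gamma b_f^{-1}$ lies in $V$, so $\ell_e(w)=0$; of course this configuration is not $1$-acylindrical. The honest proof of your claim is that, by $1$-acylindricity, $c$ and $c'$ fix only a single vertex each in the Bass--Serre tree of the $e$-splitting of $B$, so $w=\beta^{-1}c\beta\,c'^{-1}$ is a product of two elliptics with disjoint fixed sets (hence hyperbolic, of positive $e$-length) unless $\beta$ stabilises the relevant vertex, which would force $\ell_e(R)=0$.

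The genuine gap is the remaining case $|\sigma_e(R)|\ge2$ and $|\sigma_f(R)|\ge2$, which is the heart of the theorem and which you only sketch, explicitly deferring the decisive point (that the rewriting in the $\Z$-cover strictly reduces the complexity, or lands in a case already settled). The paper's proof supplies exactly the three ingredients you are missing. First, no dichotomy ``nonsingular equations versus $\Z$-cover'' is needed: $1$-acylindricity forces $a_e$ and $a_f$ (and likewise the $B$-side generators) not to commute, so the vertex groups are nonabelian limit groups, $\mathrm{Hom}(\G,\Q)$ has dimension at least $5$, and an epimorphism $\phi\colon\G\to\Z$ with $a_e,a_f,R\in\ker\phi$ always exists; your branch (a), whose two-edge analogue of the nonsingularity argument of \S\ref{es0} is in any case unclear, never has to be confronted. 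Second, before passing to $K=\ker\phi$ one must normalise $R$: in the separating-pair case the $e$- and $f$-letters alternate in sign, one locates cyclic subwords $e^{-1}a_0f$ and $(eb_0f^{-1})^{\pm1}$ and replaces $f$ by $a_0^{-1}fb_0$, so that $R$ contains subwords $(e^{-1}f)^{\pm1}$ and $(ef^{-1})^{\pm1}$; this is what guarantees that any rewrite $R_0$ in the induced decomposition of $K$ involves at least three distinct lifted edges ($p+q\ge1$), so that the subgraphs $X_0,X_1\subset X'$ obtained by dropping the extreme lifts satisfy the inductive hypothesis and $K/N(\{R_n\})$ reassembles as an HNN extension over $\pi_1(\mathcal K,X_0)\cong\pi_1(\mathcal K,X_1)$. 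Third, in the non-separating configuration the rewrite may fail to reduce anything ($p=q=0$); the rescue is not a finer length estimate but the observation that the lifted pair $\{e_0,f_0\}$ then separates the cover, so the separating case (proved first) applies. Without these steps your induction does not close, so as it stands the proposal proves the peripheral cases but not the main one.
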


\begin{proof}
We may assume that $\sigma_e(R)\ne 0\ne\sigma_f(R)$, for otherwise we can apply Theorem \ref{expsumzero}.  In particular, neither $e$ nor $f$ separates $X$.  It is, however, possible that $\{e,f\}$ separates $X$ as a pair of edges. 

We argue by induction on $\ell_e(R)+\ell_f(R)\ge 2$.  In the initial case,
$\ell_e(R)=\ell_f(R)=1$.  If $\{e,f\}$ separates $X$, then we have a presentation for $\G$ of the form $(A*B,f|a_e=b_e,a_ff=fb_f)$, where
$a_e\in A,b_e\in B$ generate the edge group of $e$ and $a_f\in A,b_f\in B$ generate the edge group of $f$.  We can also write $R$ (up to cyclic conjugation and inversion) in the form $xfy$ with $x\in A$ and $y\in B$.  Hence $\G/N(R)$ has a presentation of the form $(A*B|a_e=b_e,xa_fx^{-1}=y^{-1}b_fy)$. Now by $1$-acylindricity, the subgroup of $A$ generated by $\{a_e,xa_fx^{-1}\}$ is nonabelian, and hence is free of rank $2$. Similarly the subgroup of $B$ generated by $\{b_e,y^{-1}b_fy\}$ is free of rank $2$.  Then $\G/N(R)$ is just the free product of $A$ and $B$ amalgamating these two subgroups of rank $2$.  In particular, the natural map $A\to\G/N(R)$ is injective.

A similar argument applies if the pair of edges $\{e,f\}$ does not separate $X$.  Here $\G=(A,e,f|a_ee=eb_e,a_ff=fb_f)$, $R=xfye^{-1}$, and $\G/N(R)=(A*B|a_ee=eb_e,xa_fx^{-1}e=ey^{-1}b_fy)$ is an HNN extension of $A$ with associated subgroups free of rank $2$.  Again the natural map $A\to\G/N(R)$ is injective.

\medskip
The inductive step of the proof proceeds in a similar fashion to that in Theorem \ref{expsumzero}.  Suppose first that $\{e,f\}$ separates $X$.  Then, with 
suitable choices of orientation, the edges $\{e,f\}$ appear in $R$ with alternating signs.  Since $\sigma_e(R)=-\sigma_f(R)\ne 0$, there must be a (cyclic) subword of $R$ of the form $e^{-1}a_0f$ or $f^{-1}a_0^{-1}e$ with $a_0\in A$, and another (cyclic) subword $(eb_0f^{-1})^{\pm 1}$ with $b_0\in B$. Replacing each occurrence of the symbol $f$ in $R$ by $a_0^{-1}fb_0$ does not change the problem, but allows us to assume that $R$ has subwords of the form $(e^{-1}f)^{\pm 1}$ and $(ef^{-1})^{\pm 1}$.

By the usual trick, we may assume that $A$ is generated by $a_e,a_f$ and the $A$-letters that occur in $R$, while a similar property holds for $B$.  (Letting $A_0,B_0$ be the subgroups generated by these letters, and $\G_0$ the corresponding subgroup of $\G$, if the theorem is true for $\G_0$ then $\G/N(R)=A*_{A_0}\G_0/N(R)*_{B_0}B$ and the result follows for $\G$.)

Since the graph of groups is $1$-acylindrical, $a_e$ and $a_f$ do not commute. Hence $A$ is a nonabelian limit group, and so has a nonabelian free homomorphic image.  It follows that $Hom(A,\Q)$ has $\Q$-dimension at least $2$.  A similar property holds for $B$, and so $Hom(\G,\Q)$ has $\Q$-dimension at least $5$.  In particular, we may find an epimorphism $\phi:\G\to\Z$ with $a_e,a_f,R\in K:=\mathrm{Ker}(\phi)$.  In the induced graph-of-groups decomposition  $K=\pi_1(\mathcal{K},\bar{X})$ for $K$, the lifts of the edges $e,f$ are indexed by integers, and the various choices have been made to ensure that at least three distinct edges from $\{e_n,f_n;n\in\Z\}$ occur in any rewrite of $R\in K$. Without loss of generality, there is a rewrite $R_0$ such that the minimum index of an $e$-edge involved in $R_0$ is $0$, and the minimum index of an $f$-edge is also $0$.  If $p,q$ denote the maximum index of $e$-edges, $f$-edges respectively that occur in $R_0$, then $p+q\ge 1$. Let $X_0$ be the subgraph of the covering graph $\bar{X}$ that contains all vertices, all edges that are not $e$-edges or $f$-edges, and the $e$-edges $e_0,\dots,e_{p-1}$ and $f$-edges $f_0,\dots,f_{q-1}$. Let $X_1$ denote the subgraph of $\bar{X}$ that contains
all vertices, all edges that are not $e$-edges or $f$-edges, and the $e$-edges $e_1,\dots,e_p$ and $f$-edges $f_1,\dots,f_q$. Finally, let $X'$ denote the subgraph of $\bar{X}$ that contains all vertices, all edges that are not $e$-edges or $f$-edges, and the $e$-edges $e_0,\dots,e_p$ and $f$-edges $f_0,\dots,f_q$. Then the inductive hypothesis ensures that the natural maps from $A':=\pi_1(\mathcal{K},X_0)$ and from $B':=\pi_1(\mathcal{K},X_1)$ to $C':=\pi_1(\mathcal{K},X')/N(R_0)$ are injective.  Finally, $K/N(\{R_n;n\in\Z\})$ is an HNN-extension of $C'$ with associated subgroups $A'$ and $B'$, so it follows that $A'\to C'$ and $B'\to C'$ are injective.  Hence it also follows that $A\to \G/N(R)$ is injective, as required.

\medskip
Finally, if $\{e,f\}$ do not separate $X$, then $\pi_1(X)$ is free of rank at least $2$.  Indeed, we can find an epimorphism $\phi:\G\to\Z$ that vanishes on $\pi_1(\mathcal{G},X\sm\{e,f\})$ and on $R$, and analyse the kernel $K$ of $\phi$ as in the separating case above.  This time, $\bar X$ consists of a number of copies of $X\sm\{e,f\}$ indexed by $\Z$, connected by lifts $e_n,f_n$ of the edges $e,f$.  When we rewrite $R$ and try to apply the inductive
hypothesis, we find that we cannot in general ensure that we have reduced $\ell_e(R)+\ell_f(R)$.  In other words, it is possible that $p=0=q$ in the above analysis.  However, in this case the pair $\{e_0,f_0\}$ separates two copies of $X\sm\{e,f\}$, so we can instead apply the separating case which we have just proved.

Again, we can express $\G/N(R)$ as an HNN-extension, where the injectivity of the associated subgroups is guaranteed either by the inductive hypothesis or by the separating case.

This completes the inductive step, and hence the proof.
\end{proof}

\end{document}